\newtheorem{theorem}{Theorem}[section]
\newtheorem{lemma}[theorem]{Lemma}
\newtheorem*{lemma*}{Lemma}
\newtheorem{proposition}[theorem]{Proposition}
\newtheorem{corollary}[theorem]{Corollary}
\theoremstyle{definition}
\newtheorem{definition}[theorem]{Definition}
\newtheorem{question}[theorem]{Question}
\theoremstyle{remark}
\numberwithin{equation}{section}
\newcommand{\C}{\mathbb{C}}
\newcommand{\W}{\mathscr{W}}
\newcommand{\X}{\mathbb{X}}
\newcommand{\Y}{\mathbb{Y}}
\newcommand{\dtext}{\textnormal d}
\newcommand{\onto}{\xrightarrow[]{{}_{\!\!\textnormal{onto\,\,}\!\!}}}
\newcommand{\deff}{\stackrel {\textnormal{def}}{=\!\!=} }
\DeclareMathOperator{\diam}{diam}
\def\le{\leqslant}
\begin{document}

\title[Approximation up to the boundary of homeomorphisms]{Approximation up to the Boundary of Homeomorphisms of Finite Dirichlet Energy}

\author[Iwaniec]{Tadeusz Iwaniec}
\address{Department of Mathematics, Syracuse University, Syracuse,
NY 13244, USA and Department of Mathematics and Statistics,
University of Helsinki, Finland}
\email{tiwaniec@syr.edu}

\author[Kovalev]{Leonid V. Kovalev}
\address{Department of Mathematics, Syracuse University, Syracuse,
NY 13244, USA}
\email{lvkovale@syr.edu}

\author[Onninen]{Jani Onninen}
\address{Department of Mathematics, Syracuse University, Syracuse,
NY 13244, USA}
\email{jkonnine@syr.edu}
\thanks{ T. Iwaniec was supported by the NSF grant DMS-0800416 and the Academy of Finland project 1128331. L. Kovalev was supported by the NSF grant DMS-0968756.
J. Onninen was supported by the NSF grant DMS-1001620.}

\subjclass[2000]{Primary 58E20; Secondary 46E35, 30F30}

\date{\today}

\keywords{Hopf differential, Dirichlet energy, Lipschitz regularity, harmonic mapping, extremal problems}

\begin{abstract}
Let $\,\mathbb X \subset \mathbb C\,$ and $\,\mathbb Y \subset \mathbb C\,$  be Jordan domains of the same finite connectivity, $\,\mathbb Y\,$ being inner chordarc regular (such are Lipschitz domains). Every homeomorphism $h \colon \mathbb X \to \mathbb Y\,$ in the Sobolev space
$\W^{1,2}$ extends to a continuous map $h \colon \overline{\mathbb X} \to \overline{\mathbb Y}\,$.  We prove that there exist homeomorphisms $\,h_k \colon \overline{\mathbb X} \to \overline{\mathbb Y}\,$  which converge to $\,h\,$ uniformly and in $\,\mathscr W^{1,2}(\mathbb X\,, \mathbb Y )\,$.  The problem of approximation of Sobolev homeomorphisms, raised by J. M. Ball and L. C. Evans, is deeply rooted in a study of energy-minimal deformations in nonlinear elasticity. The new feature of our main result is that approximation takes place also on the boundary, where the original map need not be a homeomorphism.
\end{abstract}

\maketitle

\section{Introduction}
Throughout this text $\,\mathbb X\,$ and $\,\mathbb Y\,$ are finitely connected Jordan domains in the complex plane $\,\mathbb C \simeq \mathbb R^2$. Each boundary $\, \partial \X\, $ and $\,\partial \Y\, $ consists of $\,\ell\,$  disjoint Jordan curves.
We shall consider orientation preserving homeomorphisms $\, h \, : \mathbb X \onto\,\mathbb Y\,$ of finite Dirichlet energy
$$
\mathscr E_{{_\mathbb X}}[h] = \iint_\mathbb X |\,Dh(z)|^2\; \dtext x \,\dtext y \;<\;\infty, \qquad z=x+iy
$$
where $|Dh|$ stands for the Hilbert-Schmidt norm of the derivative matrix $Dh$. 
Such a mapping has a continuous extension $h \colon \overline{\X} \onto \overline{\Y}$ which is not necessarily a homeomorphism~\cite{IO}. In this paper we show that $h$ can be strongly approximated by homeomorphisms between closed domains, provided $\Y$ is inner chordarc regular, see Definition~\ref{ICA}. In particular, $\Y$ can be a Lipschitz domain.

\begin{theorem}\label{Main}
Let $\X$ and $\Y$ be finitely connected Jordan domains, $\Y$ being inner chordarc.   Let  $\, h\,:\,\mathbb X \onto \mathbb Y\,$  be a homeomorphism in the Sobolev space $\,\mathscr W^{1,2}(\mathbb X ,\,\mathbb Y ) \,$. Then:
 \begin{enumerate}[(a)]
 \item  there exist  homeomorphisms $\, h_k\,:\,\overline{\mathbb X} \onto \overline{\mathbb Y}\,,\; k =1, 2, \dots,\,$ that converge to $\, h\,:\,\overline{\mathbb X} \onto \overline{\mathbb Y}\,$  uniformly and strongly in $\,\mathscr W^{1,2}(\mathbb X )\,$.
\item Moreover, for every compact subset $\,\mathbb G \subset \mathbb X\,$, we have  $\,h_k \equiv h\,$ on $\,\mathbb G \,$, provided $\,k = k(\mathbb G)\,$ is sufficiently large.
 \item If, in addition,  $\, h\,:\,\mathfrak X \rightarrow  \partial \,\mathbb Y\,$ is injective on a compact subset $\,\mathfrak X \subset \partial\,\mathbb X\,$, then $\, h_k\,$ can be chosen so that $\,h_k \equiv h\,$ on $\,\mathfrak X $.
 \end{enumerate}
\end{theorem}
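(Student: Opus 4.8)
The plan is to leave $h$ untouched on a large compact subset of $\X$ and to alter it only inside an arbitrarily thin collar of $\partial\X$, where the Dirichlet integral of $h$ is already negligible; on that collar the monotone (non-injective) boundary map $h|_{\partial\X}$ must be opened up into a homeomorphism at a cost of only $o(1)$ in energy, and this is the sole place where inner chordarc regularity of $\Y$ enters. It suffices to prove the \emph{one-step} assertion: for each $\varepsilon>0$ there is an orientation preserving homeomorphism $g\colon\overline\X\onto\overline\Y$ with $\|g-h\|_{C(\overline\X)}<\varepsilon$, with $\mathscr E_\X[g]<\mathscr E_\X[h]+\varepsilon$, with $g\equiv h$ on $\{z\in\X\colon\dist(z,\partial\X)\ge\varepsilon\}$, and with $g\equiv h$ on $\mathfrak X$ in the situation of (c). Setting $h_k:=g_{1/k}$ gives uniform convergence $h_k\to h$; the energy bound makes $\{h_k\}$ bounded, hence weakly sequentially precompact, in $\mathscr W^{1,2}(\X)$, and the uniform limit forces $h_k\rightharpoonup h$ there; weak lower semicontinuity together with $\limsup_k\mathscr E_\X[h_k]\le\mathscr E_\X[h]$ gives $\mathscr E_\X[h_k]\to\mathscr E_\X[h]$, i.e.\ $\|Dh_k\|_{L^2}\to\|Dh\|_{L^2}$, and in the Hilbert space $L^2(\X)$ weak convergence plus convergence of norms upgrades to strong convergence of the gradients, which is (a). Part (b) is immediate since any compact $\mathbb G\subset\X$ lies in $\{\dist(\cdot,\partial\X)\ge\varepsilon\}$ once $\varepsilon<\dist(\mathbb G,\partial\X)$, and (c) is built into the one-step assertion.

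\emph{The boundary map.} The continuous extension (which exists by \cite{IO}) restricts to a monotone surjection $h\colon\partial\X\onto\partial\Y$: it carries the $\ell$ Jordan curves of $\partial\X$ onto those of $\partial\Y$, and on each it is injective off a disjoint family of closed arcs, collapsing each such arc to a point. Opening these arcs onto disjoint short subarcs of $\partial\Y$, of total opened length $<\varepsilon$, yields an orientation preserving homeomorphism $\psi\colon\partial\X\onto\partial\Y$ with $\|\psi-h\|_{C(\partial\X)}<\varepsilon$; since each collapsed arc meets $\mathfrak X$ in at most one point — two would contradict injectivity of $h$ on $\mathfrak X$ — this can be arranged with $\psi\equiv h$ on $\mathfrak X$. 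This step is elementary and one-dimensional.

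\emph{The collar modification.} Fix a small $s>0$ and put $N_s:=\{w\in\overline\Y\colon\dist(w,\partial\Y)\le s\}$ and $U_s:=\{z\in\overline\X\colon h(z)\in N_s\}$. For $s$ small $U_s$ is a collar of $\partial\X$ contained in $\{\dist(\cdot,\partial\X)<\varepsilon\}$ (otherwise a limiting argument places an interior point of $\X$ on $\partial\Y$), and $\mathscr E_{U_s}[h]\to0$ as $s\to0$ by absolute continuity of the Dirichlet integral. Inner chordarc regularity of $\Y$ provides, for a.e.\ small $s$ and near each boundary component $\Lambda$, a bi-Lipschitz identification $\Phi\colon N_s\to\Lambda\times[0,s]$ with constants depending only on the chordarc data, carrying $\partial\Y$ to $\Lambda\times\{0\}$ and the parallel curve to $\Lambda\times\{s\}$. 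I would then define $g$ to agree with $h$ off $U_s$, and on $U_s$ to be $\Phi^{-1}\circ(g_1,g_2)$, where $g_2:=\dist(h(\cdot),\partial\Y)$ is the depth coordinate of $h$ itself (so $|Dg_2|\le|Dh|$ pointwise, $g_2=0$ on $\partial\X$, and $g$ matches $h$ on the inner boundary of $U_s$), while $g_1$ coincides with the first component of $\Phi\circ h$ outside a still thinner sublayer of $U_s$ — where that component already restricts, on each level set of $g_2$, to a homeomorphism onto $\Lambda$ — is deformed through fibrewise homeomorphisms across the sublayer, and equals $\psi$ on $\partial\X$. Carrying this out near all $\ell$ components produces $g$; it is an orientation preserving homeomorphism of $\overline\X$ onto $\overline\Y$ because each level set is mapped homeomorphically onto $\Lambda\times\{c\}$ and the levels are nested, and $\|g-h\|_{C(\overline\X)}\le\|\psi-h\|_{C(\partial\X)}+\diam N_s$. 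Its energy equals $\mathscr E_{\X}[h]-\mathscr E_{U_s}[h]$ from the unchanged part plus, from $U_s$, at most a bounded multiple of $\mathscr E_{U_s}[h]$ (for the $g_2$–part and the unchanged $g_1$–part) together with the cost of the fibrewise deformation.

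\textbf{The main obstacle} is exactly that last item. The fibrewise deformation genuinely changes $h$ on a set of positive measure, and it must turn the monotone map $h|_{\partial\X}$ into the homeomorphism $\psi$ while contributing only $o(1)$ to the Dirichlet integral. In the bi-Lipschitz collar model the deformation is tangential to $\Lambda$ and is run between maps of controlled speed — the parametrisations $\psi$ and the realigned parallel curve $\Phi\circ h$, whose tangential energy can be made small by selecting, via an averaging argument, a good level set — over a sublayer whose thickness we are free to choose; the bookkeeping must be done so as to keep the tangential and the radial contributions small simultaneously, and this is the only step where a less regular $\Y$, one with inward cusps or non-rectifiable boundary, would defeat the estimate, since then no bi-Lipschitz collar model is available and the un-collapsing cannot be made cheap. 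Everything else — monotonicity of the boundary extension, the density of homeomorphisms among monotone circle maps, and the weak-to-strong passage in $\mathscr W^{1,2}$ — is standard.
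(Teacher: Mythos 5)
Your strategy — modify $h$ only in a thin collar of $\partial\X$, model the collar in $\Y$ by a bi-Lipschitz product $\Lambda\times[0,s]$ via inner chordarc regularity, keep the radial component of $h$ and deform the tangential component fibrewise between the opened-up boundary map $\psi$ and a good interior level curve, then pass from weak to strong $\W^{1,2}$ convergence via lower semicontinuity and convergence of norms — is genuinely different from the paper's. The paper instead partitions the target circle by simple values of $h$, passes to the preimage segments $\X_\kappa$, replaces $h$ there by Rad\'o--Kneser--Choquet harmonic extensions, builds an explicit boundary approximant $f_m$ via formula \eqref{Deff/of/fm}, proves the Lipschitz-type difference-quotient bound of Lemma~\ref{mess}, and converts it into an interior energy bound through the Douglas formula \eqref{Douglas}. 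Your reduction to a one-step assertion and the weak-to-strong upgrade at the end are sound and standard.

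The gap is precisely at the point you flag as ``the main obstacle,'' and it is a real gap, not just bookkeeping. You need $\mathscr E_{U_s}[g]=o(1)$, and the fibrewise deformation is where all the analytic content lies. In the collar model $\Lambda\times[0,s']$ a naive depth-linear interpolation from $\psi$ at level $0$ to the tangential component of $\Phi\circ h$ at level $s'$ contributes a tangential term $\int_\Lambda|\psi'|^2\,d\theta\cdot s'$ and a radial term $(s')^{-1}\|\psi-(\Phi\circ h)_1\|_{L^2(\Lambda)}^2$; but the boundary trace of a $\W^{1,2}$ harmonic map lies only in $H^{1/2}(\Lambda)$ (this is exactly what Douglas's formula measures), so $\psi'$ need not belong to $L^2(\Lambda)$ and the first term can be infinite. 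One is thus forced to use a harmonic-type extension in the sublayer, whose energy is an $H^{1/2}$-seminorm of the boundary data, and the whole problem becomes: construct $\psi$ so that it agrees with $h$ on $\mathfrak X$, is a homeomorphism, converges uniformly to $h$, \emph{and} has $H^{1/2}$-seminorm controlled by that of $h$ uniformly in the approximation parameter. ``Opening collapsed arcs'' does not automatically do this — a short opening can blow up difference quotients at the arc endpoints. This quantitative boundary estimate is the content of Lemma~\ref{mess} in the paper (proved by a delicate three-case analysis using the geometry of the segment $\mathbb S$ and the law of cosines), and it is fed into \eqref{Douglas} and dominated convergence to conclude. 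Your proposal names the hole but does not fill it, and there is no evidence that your averaging-over-level-sets plan leads to a shorter argument than the paper's. A secondary, smaller gap: you invoke without proof that for a.e.\ $s$ the level set of $g_2=\dist(h(\cdot),\partial\Y)$ is a Jordan curve on which $h$ has finite-energy restriction; this needs a coarea argument combined with Theorem~\ref{thmICA}, since $h$ is merely a $\W^{1,2}$ homeomorphism and $\dist(\cdot,\partial\Y)$ is not smooth for a general inner chordarc $\Y$.
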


The motivation for part (c) comes from variational problems for mappings between quadrilaterals, i.e., Jordan domains with four distinguished points which form the injectivity set $\mathfrak X$. We will pursue such applications of Theorem~\ref{Main} elsewhere. 

Let us compare Theorem~\ref{Main} to the known results in this field. The problem of approximation of Sobolev homeomorphisms was raised by J.~ M.~Ball and L.~C.~Evans in 1990s~\cite{Ba,Ba2}. There have been several recent advances in this direction~\cite{BM, DP, IKO1, IKO2, Mo}. In particular, in~\cite{IKO1, IKO2} we proved that any $\W^{1,p}$-homeomorphism $h\colon \mathbb U\onto\mathbb V$ between open subsets of $\mathbb R^2$ can be approximated in the $\W^{1,p}$ norm by diffeomorphisms of these open subsets. 
The new feature of Theorem~\ref{Main} is that approximation takes place also on the boundary, where the original map need not be a homeomorphism. This explains why Theorem~\ref{Main} imposes regularity assumptions on the boundaries of $\X$ and $\Y$. 

Combining Theorem~\ref{Main} with~\cite[Theorem 1.2]{IKO1} yields the following result. 

\begin{corollary}
If  $\,h\,:\,\mathbb X \onto \mathbb Y\,$  is a $\,\mathscr C^\infty$--diffeomorphism in $\mathscr W^{1,2}(\X)$ then, in addition to all properties listed in 
Theorem~\ref{Main}, the  mappings $\,h_k \,:\,\mathbb X \onto \mathbb Y\,$ can also be found as $\,\mathscr C^\infty$ -diffeomorphisms. If $h$ is only a homeomorphism, such an approximation by diffeomorphisms is still available, except for the property (b). 
\end{corollary}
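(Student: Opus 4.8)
The plan is to feed the homeomorphisms produced by Theorem~\ref{Main} into the planar diffeomorphic approximation of \cite[Theorem~1.2]{IKO1}, which says that a $\W^{1,2}$--homeomorphism between open subsets of the plane is a strong $\W^{1,2}$--limit of $C^\infty$--diffeomorphisms of those open sets. Crucially, that construction is \emph{local}: the approximating diffeomorphism may be left unchanged on any compact subset of the region where the given map is already a $C^\infty$--diffeomorphism, and, with the aid of a Sobolev embedding bound, one also controls the uniform distance between the two maps.

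First I apply Theorem~\ref{Main} to $h$ — which here is only required to be a homeomorphism in $\W^{1,2}(\X,\Y)$ — obtaining homeomorphisms $\psi_k\colon\overline{\X}\onto\overline{\Y}$ with $\psi_k\to h$ uniformly on $\overline{\X}$ and strongly in $\W^{1,2}(\X)$, and obeying (b) and (c). The construction of these $\psi_k$ can be arranged so that each one is a $C^\infty$--diffeomorphism in some collar $\mathcal N_k$ of $\partial\X$ inside $\X$ — the near--boundary part of that construction being built from explicit smooth formulas adapted to the inner chordarc structure of $\Y$ — while, by (b), $\psi_k\equiv h$ on a compact set that exhausts $\X$ as $k\to\infty$. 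I then apply \cite[Theorem~1.2]{IKO1} to the planar homeomorphism $\psi_k\colon\X\onto\Y$, leaving it unchanged on $\mathcal N_k$, to get a $C^\infty$--diffeomorphism $h_k\colon\X\onto\Y$ with $\|h_k-\psi_k\|_{\W^{1,2}(\X)}+\sup_{\X}|h_k-\psi_k|<1/k$ and $h_k\equiv\psi_k$ on $\mathcal N_k$. Since $h_k$ agrees with $\psi_k$ in a collar of $\partial\X$, it extends to a homeomorphism $\overline{\X}\onto\overline{\Y}$ whose boundary map equals that of $\psi_k$; hence $h_k\to h$ uniformly on $\overline{\X}$ and strongly in $\W^{1,2}(\X)$, while $h_k\equiv\psi_k\equiv h$ on $\mathfrak X$, which gives (c). This settles the assertion when $h$ is a homeomorphism, with every conclusion of Theorem~\ref{Main} except possibly (b).

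Suppose in addition that $h$ is a $C^\infty$--diffeomorphism of $\X$ onto $\Y$. Then Theorem~\ref{Main}(b) provides $\psi_k\equiv h$ on a compact set $\mathbb G_{m_k}$ with $m_k\to\infty$, and since $\psi_k=h$ is smooth there I may enlarge the ``untouched'' region in the previous paragraph from $\mathcal N_k$ to $\mathcal N_k\cup\mathbb G_{m_k}$; the resulting $h_k$ then also satisfies $h_k\equiv h$ on $\mathbb G_{m_k}$, hence $h_k\equiv h$ on any prescribed compact $\mathbb G\subset\X$ once $k$ is large, which is (b). For an $h$ that is only continuous this last move is unavailable: off $\mathbb G_{m_k}$ the homeomorphism $\psi_k$ carries no smoothness that would permit leaving it fixed, so the smoothing must be spread over all of $\X\setminus\mathcal N_k$ and $h_k\not\equiv h$ on $\mathbb G$ in general, which is exactly the stated loss of (b). The one genuine difficulty will be this boundary bookkeeping — smoothing $\psi_k$ without destroying its homeomorphic extension to the closed domains or its prescribed boundary identifications — and it is handled precisely by the locality of \cite[Theorem~1.2]{IKO1} together with the smoothness of the Theorem~\ref{Main}--approximants in a collar of $\partial\X$.
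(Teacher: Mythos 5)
Your overall plan — feed the homeomorphisms from Theorem~\ref{Main} into the diffeomorphic smoothing of \cite[Theorem~1.2]{IKO1}, exploiting the locality of that smoothing — matches the paper's one-line indication ``Combining Theorem~\ref{Main} with \cite[Theorem~1.2]{IKO1}.'' But there is a concrete gap in the way you implement it.

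Your argument pivots on the claim that the approximants $\psi_k$ produced by Theorem~\ref{Main} ``can be arranged so that each one is a $C^\infty$--diffeomorphism in some collar $\mathcal N_k$ of $\partial\X$ inside $\X$.'' This is not what the construction gives. In Steps~II--V the map is replaced, inside each segment-preimage $\X_\kappa$, by a harmonic diffeomorphism onto $\mathbb S_\kappa$; this is $C^\infty$ \emph{in the interior of each} $\X_\kappa$, but adjacent pieces (and the unchanged map $h^\nu$ on $\X\setminus\bigcup_\kappa\overline{\X_\kappa}$) are glued along the arcs $\gamma_\kappa$ with \emph{only continuity} across the seams. The arcs $\gamma_\kappa$ run from $\partial\X$ into $\X$ and back to $\partial\X$, so they penetrate every collar of $\partial\X$. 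Thus $\psi_k$ is generically only piecewise smooth near $\partial\X$; it is not a $C^\infty$-diffeomorphism on any collar $\mathcal N_k$, and the key step where you leave $\psi_k$ fixed on $\mathcal N_k$ and then argue that $h_k$ therefore inherits the same boundary extension (giving the homeomorphism of closures and property~(c)) does not go through as written. A secondary issue is that the locality statement of \cite[Theorem~1.2]{IKO1} lets you leave the map unchanged on a \emph{compact} subset of the open domain where it is already a diffeomorphism; a collar of $\partial\X$ is not compactly contained in $\X$, so even if $\psi_k$ were smooth there, freezing it on $\mathcal N_k$ needs additional justification.

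To complete the argument along your lines one would have to address the seams $\gamma_\kappa$ and the arcs $\Gamma_\kappa$ where the boundary map was altered: for instance, show directly that the \cite{IKO1} smoothing of $\psi_k$ can be carried out so that it converges to $\psi_k$ uniformly up to $\partial\X$ (not merely locally), so that the smoothed diffeomorphisms extend to homeomorphisms of $\overline{\X}$ onto $\overline{\Y}$ and their boundary traces converge to that of $\psi_k$; and separately, to keep property~(c), show the smoothing can be arranged so the trace on $\mathfrak X$ is preserved. The paper itself supplies none of these details, so this is exactly where the work is, and your proposal skips it by asserting a smooth collar that the construction does not produce.
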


One of the fundamental problems in topology is to approximate continuous mappings by homeomorphisms. The approximation procedures, still only partially understood, have led topologists to the concept of monotone mappings~\cite{Mor} and somewhat subtle concept of cellular mappings~\cite{Br}. We refer the interested reader to~\cite{Mc, Rab}. In the mathematical theory of hyperelasticity, on the other hand, we are concerned with the energy-minimal deformations $h \colon \X \onto \Y$, so having additional Sobolev type regularity. However, very often the injectivity of the energy minimal mappings is lost, though they enjoy some  features of homeomorphisms, like monotonicity.  In particular, the question of approximation of a monotone mapping in the Sobolev space $\mathscr W^{1,2}(\X)$  by homeomorphisms $h_k \colon \overline{\X} \onto \overline{\Y}$ gains  interest in the mathematical models of elasticity. A novelty in these directions is the following corollary of Theorem~\ref{Main} and~\cite[Theorem~1.6]{CIKO}.

\begin{corollary} 
Suppose $\,h\,:\,\mathbb X \onto \mathbb Y\,$  lies in the Sobolev space $\mathscr W^{1,2}(\X)$ and extends 
to a continuous monotone map $\,h\,:\, \overline{\mathbb X} \onto \overline{\mathbb Y}$. Then
there exist  homeomorphisms $\, h_k\,:\,\overline{\mathbb X} \onto \overline{\mathbb Y}\,,\; k =1, 2, \dots,\,$ that converge to
 $\, h\,:\,\overline{\mathbb X} \onto \overline{\mathbb Y}\,$  uniformly and strongly in $\,\mathscr W^{1,2}(\mathbb X )\,$.
 \end{corollary}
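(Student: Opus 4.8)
The plan is to interpose \cite[Theorem~1.6]{CIKO} between the hypothesis and Theorem~\ref{Main}. That result asserts, roughly, that a continuous monotone map $\overline{\X}\onto\overline{\Y}$ of class $\W^{1,2}(\X)$ can be approximated by homeomorphisms of the \emph{open} domains, uniformly and in energy; Theorem~\ref{Main} then upgrades each such homeomorphism to one of the \emph{closed} domains; and a diagonal argument welds the two approximations together. Concretely, I would first invoke \cite[Theorem~1.6]{CIKO}: since $h\colon\overline{\X}\onto\overline{\Y}$ is continuous and monotone, $h$ restricted to $\X$ is a surjection onto $\Y$ lying in $\W^{1,2}(\X)$, and $\Y$ is inner chordarc, that theorem produces homeomorphisms $g_j\colon\X\onto\Y$, $j=1,2,\dots$, with $g_j\to h$ uniformly on $\overline{\X}$ and strongly in $\W^{1,2}(\X)$. (The inner chordarc regularity of $\Y$ is what makes this intermediate approximation available, and in particular uniform up to the boundary.)

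Next I would apply Theorem~\ref{Main}(a) to each $g_j$ separately. Indeed $g_j$ is a homeomorphism between the finitely connected Jordan domains $\X$ and $\Y$, it belongs to $\W^{1,2}(\X,\Y)$, and $\Y$ is inner chordarc; hence $g_j$ extends continuously to $\overline{\X}\onto\overline{\Y}$ and there are homeomorphisms $g_{j,k}\colon\overline{\X}\onto\overline{\Y}$, $k=1,2,\dots$, converging to this extension uniformly on $\overline{\X}$ and strongly in $\W^{1,2}(\X)$ as $k\to\infty$.

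Finally I would diagonalize. For each $j$, choose $k_j$ so large that
$$
\sup_{\overline{\X}}\,\abs{g_{j,k_j}-g_j}\;+\;\norm{Dg_{j,k_j}-Dg_j}_{L^2(\X)}\;<\;1/j ,
$$
and set $h_j\deff g_{j,k_j}$. Each $h_j\colon\overline{\X}\onto\overline{\Y}$ is a homeomorphism, and by the triangle inequality together with $g_j\to h$ one gets $h_j\to h$ uniformly on $\overline{\X}$ and strongly in $\W^{1,2}(\X)$; relabelling the sequence $(h_j)$ as $(h_k)$ yields the corollary. Note that no analogue of part (b) of Theorem~\ref{Main} is claimed here, nor could one hold in general, since $h$ need not be injective on $\X$.

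As for the main obstacle: once the two cited theorems are granted, the argument is a routine bookkeeping diagonalization, so the only point that genuinely needs care is whether \cite[Theorem~1.6]{CIKO} delivers uniform convergence on the \emph{closed} domain $\overline{\X}$ rather than merely on compact subsets of $\X$. If only the latter were at hand, I would close the gap by Courant--Lebesgue (length--area) modulus-of-continuity estimates: the energy-bounded family $\{g_j\}$ is equicontinuous on $\overline{\X}$ when the target $\Y$ is inner chordarc, and equicontinuity together with locally uniform convergence forces uniform convergence on $\overline{\X}$.
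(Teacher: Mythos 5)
Your proposal matches the route the paper intends: the corollary is stated as an immediate consequence of Theorem~\ref{Main} and \cite[Theorem~1.6]{CIKO}, and the natural way to realize that is exactly your chain $h \leadsto g_j \leadsto g_{j,k}$ followed by a diagonal extraction. The paper supplies no further argument, so the only thing to flag is the (harmless) bookkeeping point you already raise yourself: the corollary implicitly inherits the inner-chordarc hypothesis on $\Y$ from Theorem~\ref{Main}, and one must know that $g_j\to h$ uniformly up to $\partial\X$ — which, if \cite{CIKO} only gives local convergence, can indeed be recovered from equicontinuity of the energy-bounded family $\{g_j\}$ via the Courant--Lebesgue estimate together with the chordarc regularity of $\Y$, exactly as you indicate.
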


\begin{question}
Does Theorem~\ref{Main} remain valid when target $\Y$ is an arbitrary finitely connected Jordan domain?
\end{question}

\begin{question}
Can Theorem~\ref{Main} be extended to Sobolev spaces $\W^{1,p}$, $p\in (1, \infty)$? Or to dimensions $n>2$? 
\end{question}

\section{Preliminaries}

 \subsubsection{Royden Algebra}
The {Royden algebra} $\,\mathscr R(\mathbb X)\,$ consists of  continuous functions  $g \colon \overline{\mathbb X} \to \C$  which have finite energy. The norm is given by
$$ \big{\|} g \big{\|}_{\mathscr R(\mathbb X)}\, = \,\underset{z\in \mathbb X}{\sup}\,|g(z)| \,+\,\Big(\iint_\mathbb X |Dg(z)|^2 \;\dtext x \,\dtext y \Big)^{\frac{1}{2}}$$
 Let  $\,\mathscr R_0(\mathbb X)\,$ denote the completion of  $\,\mathscr C^\infty_0 (\mathbb X)\,$ in this norm. Any conformal mapping $\,\varphi \colon \X' \onto \X\,$ between Jordan domains  induces an isometry $\varphi^{\,\sharp} \colon \mathscr R(\mathbb X) \onto \mathscr R(\mathbb X')$ by the rule $\,\varphi^{\,\sharp} (h)=h\circ \varphi\,$. Thus the domain of definition of $\,h\,$ is permitted to be changed by any conformal transformation. The following observation will allow us to transform the target.

\begin{lemma}\label{folklore}
 Let $\,h, h_k \colon \X \onto \Y\,$ be homeomorphisms in $\,\mathscr R(\mathbb X)\,$ and $\,\Phi \colon {\Y} \onto {\Y}'\,$ be a $\,\mathscr C^1\,$-diffeomorphism that extends to a homeomorphism $\,\Phi \colon \overline{\Y} \onto \overline{\Y}'\,$. Assume that
  both the gradient matrix $\,D\Phi\,$ and its inverse $\,(D\Phi)^{-1}\,$ are bounded in $\,\Y\,$.  Then $\,\Phi \circ h_k \to \Phi \circ h\,$ in  $\,\mathscr R(\mathbb X)\,$ if and only if $\,h_k \to h\,$ in  $\,\mathscr R(\mathbb X)\,$.
 \end{lemma}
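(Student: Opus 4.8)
The plan is to reduce everything to the elementary inequality controlling how composition with a bi-Lipschitz $\mathscr C^1$ map distorts the Dirichlet energy, and then apply it symmetrically. Write $M = \sup_{\Y}\|D\Phi\|$ and $M' = \sup_{\Y}\|(D\Phi)^{-1}\|$, both finite by hypothesis. The chain rule for Sobolev maps (valid here since $h \in \mathscr W^{1,2}(\X,\Y)$, $\Phi$ is $\mathscr C^1$ with bounded derivative, so $\Phi\circ h \in \mathscr W^{1,2}$) gives $D(\Phi\circ h)(z) = D\Phi(h(z))\,Dh(z)$ pointwise a.e., whence
\begin{equation*}
|D(\Phi\circ h)(z)| \le M\,|Dh(z)| \quad\text{and, using } \Phi^{-1},\quad |Dh(z)| \le M'\,|D(\Phi\circ h)(z)|.
\end{equation*}
Integrating over $\X$ shows that the energy seminorms of $g$ and $\Phi\circ g$ are comparable for any such $g$, with constants depending only on $M,M'$; and since $\Phi$ extends to a homeomorphism $\overline{\Y}\onto\overline{\Y}'$, we also have $\sup_{\overline{\X}}|\Phi\circ g| \le \sup_{\overline{\Y}}|\Phi| \cdot(\text{nothing useful})$—more precisely the sup-norm of $\Phi\circ g - \Phi\circ h$ is controlled by the modulus of continuity of $\Phi$ on the compact set $\overline{\Y}$, and conversely the sup-norm of $g-h$ by that of $\Phi^{-1}$ on $\overline{\Y}'$.

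Next I would assemble these into the two-sided convergence statement. Apply the energy estimate to $g = h_k - h$: although $h_k - h$ need not be a homeomorphism, it still lies in $\mathscr W^{1,2}(\X)$, and the pointwise chain-rule bound is what matters. However there is a subtlety: $D(\Phi\circ h_k) - D(\Phi\circ h) = D\Phi(h_k)\,Dh_k - D\Phi(h)\,Dh$ is \emph{not} simply $D\Phi$ applied to $Dh_k - Dh$, because the point at which $D\Phi$ is evaluated also moves. So I would split
\begin{equation*}
D(\Phi\circ h_k) - D(\Phi\circ h) = D\Phi(h_k)\bigl(Dh_k - Dh\bigr) + \bigl(D\Phi(h_k) - D\Phi(h)\bigr)Dh.
\end{equation*}
The first term has $L^2$ norm at most $M\,\|Dh_k - Dh\|_{L^2(\X)} \to 0$. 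For the second term, $\|D\Phi(h_k) - D\Phi(h)\|_{L^\infty} \to 0$ because $h_k \to h$ uniformly on $\overline{\X}$, $h(\overline{\X}) = \overline{\Y}$ is compact, and $D\Phi$ is (uniformly) continuous on a neighborhood of $\overline{\Y}$; multiplying by the fixed function $Dh \in L^2(\X)$ gives a term whose $L^2$ norm tends to $0$. Together with the uniform convergence $\Phi\circ h_k \to \Phi\circ h$ (immediate from uniform continuity of $\Phi$ on $\overline{\Y}$), this proves the forward implication. The reverse implication follows by the identical argument applied to $\Phi^{-1}$ and the homeomorphisms $\Phi\circ h_k$, $\Phi\circ h$, using that $\Phi^{-1}$ is $\mathscr C^1$ with derivative $(D\Phi)^{-1}\circ\Phi^{-1}$ bounded on $\Y'$ and extends to $\overline{\Y}'\onto\overline{\Y}$.

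The only genuine obstacle is justifying the Sobolev chain rule and the mixed-term splitting rigorously—specifically, confirming that $\Phi \circ h_k \in \mathscr W^{1,2}(\X)$ with the expected a.e.\ derivative when $h_k$ is merely a $\mathscr W^{1,2}$ homeomorphism rather than a smooth map. This is standard (composition of a $\mathscr W^{1,2}$ map with a Lipschitz $\mathscr C^1$ map is $\mathscr W^{1,2}$, with the chain rule holding a.e.), so it amounts to citing the appropriate composition lemma. Everything else is the routine bookkeeping sketched above; I would present the forward direction in full and remark that the converse is symmetric.
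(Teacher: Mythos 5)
The paper states this lemma as folklore and supplies no proof, so there is nothing to compare against on the paper's side; I will assess your argument on its own terms.

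Your overall strategy is the right one: split
\[
D(\Phi\circ h_k) - D(\Phi\circ h) = D\Phi(h_k)\bigl(Dh_k - Dh\bigr) + \bigl(D\Phi(h_k) - D\Phi(h)\bigr)Dh,
\]
bound the first term by $M\,\|Dh_k - Dh\|_{L^2(\X)}$, show the second term vanishes in $L^2(\X)$, get uniform convergence from uniform continuity of $\Phi$ on the compact $\overline{\Y}$, and obtain the converse by swapping $\Phi$ for $\Phi^{-1}$. This is sound. The one place your justification slips is the claim that $\|D\Phi(h_k) - D\Phi(h)\|_{L^\infty}\to 0$ because ``$D\Phi$ is (uniformly) continuous on a neighborhood of $\overline{\Y}$.'' The hypotheses only grant that $D\Phi$ is \emph{bounded} in the open set $\Y$; there is no assumption that it extends continuously to $\overline{\Y}$, and $\Phi$ is not even defined beyond $\overline{\Y}$. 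In fact one can build bi-Lipschitz $\mathscr C^1$-diffeomorphisms of a strip whose derivative oscillates with no uniform modulus near the boundary, so the $L^\infty$ convergence you assert can genuinely fail. The correct (and easy) fix is the dominated convergence theorem: for each $z\in\X$ the point $h(z)$ is interior to $\Y$, where $D\Phi$ is continuous, and $h_k(z)\to h(z)$, so $D\Phi(h_k(z))\to D\Phi(h(z))$ pointwise; the integrand $|D\Phi(h_k)-D\Phi(h)|^2\,|Dh|^2$ is dominated by $4M^2|Dh|^2\in L^1(\X)$, hence the second term tends to $0$ in $L^2(\X)$. With that replacement your proof is complete; the symmetric argument for the converse, using that $\Phi^{-1}$ also extends to a homeomorphism $\overline{\Y}'\onto\overline{\Y}$ with $D(\Phi^{-1})=(D\Phi)^{-1}\circ\Phi^{-1}$ bounded, goes through unchanged.
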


 Our primary appliance for strong approximation in Theorem \ref{Main} will be local harmonic replacements near the boundaries of $\,\mathbb X\,$. We will rely on a well-known fact, see \cite{IKO1}.

 \begin{lemma}
 In a finitely connected Jordan domain $\, \mathbb X \subset \mathbb C\,$, we consider a function $g \in \mathscr R(\mathbb X)\,$. Let $\,h :\overline{\mathbb X} \rightarrow \mathbb C\,$  denote the continuous harmonic extension of the boundary map $\,g\,:\partial \,\mathbb X \rightarrow \mathbb C\,$ into $\,\mathbb X\,$. Then $\,h \in g +  \mathscr R_\circ(\mathbb X)\,$. Moreover,
            \begin{equation}\label{Energy}
            0 \;\leqslant \;\mathscr E_{_\mathbb X} [ g - h ]\; = \,\,\mathscr E_{_\mathbb X} [g]\;-\; \mathscr E_{_\mathbb X} [ h ] \;\leqslant \; \mathscr E_{_\mathbb X} [g]\,.
            \end{equation}
            \end{lemma}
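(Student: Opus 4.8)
The plan is to run the classical Dirichlet principle while keeping track of the Royden‑algebra structure up to $\partial\X$. Since $g$ is continuous on the compact set $\overline\X$ and has finite energy, it belongs to $\W^{1,2}(\X)$, so the affine class $g+\W^{1,2}_\circ(\X)$ — where $\W^{1,2}_\circ(\X)$ is the closure of $\mathscr C^\infty_0(\X)$ in $\W^{1,2}(\X)$ — is nonempty, and the Dirichlet integral is convex and, via the Poincar\'e inequality for the bounded domain $\X$, coercive on it. An orthogonal projection (Riesz representation) in the Hilbert space $\W^{1,2}_\circ(\X)$ equipped with the inner product $(v,w)\mapsto\iint_\X Dv\cdot Dw\,\dtext x\,\dtext y$ produces a unique minimizer $u$, characterized by $\iint_\X Du\cdot D\varphi\,\dtext x\,\dtext y=0$ for all $\varphi\in\W^{1,2}_\circ(\X)$. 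By Weyl's lemma $u$ is smooth and harmonic in $\X$, and the choice $\varphi=0$ already gives $\mathscr E_{_\X}[u]\le\mathscr E_{_\X}[g]$.

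The next step is to identify $u$ with the continuous harmonic extension $h$. Every boundary point of a finitely connected Jordan domain is regular for the Dirichlet problem; the cleanest way to see this, together with the attainment of continuous boundary values, is to transport everything by a conformal map of $\X$ onto a circular domain — such a map extends to a homeomorphism of the closures and is an isometry of the Royden algebras, as recalled before this lemma, and harmonicity is conformally invariant — where the solution is given by an explicit Poisson‑type integral that is continuous up to the boundary. Thus $u$ extends continuously to $\overline\X$ with $u=g$ on $\partial\X$, and by the maximum principle $u=h$. In particular $\mathscr E_{_\X}[h]=\mathscr E_{_\X}[u]<\infty$ and $\mathscr E_{_\X}[h]\le\mathscr E_{_\X}[g]$.

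It remains to upgrade $h-g\in\W^{1,2}_\circ(\X)$ to $h-g\in\mathscr R_\circ(\X)$. Put $w:=g-h$; it is continuous on $\overline\X$, vanishes on $\partial\X$, and has finite energy. For $\varepsilon>0$ the truncation $w_\varepsilon:=\max\{w-\varepsilon,0\}+\min\{w+\varepsilon,0\}$ is supported in $\{\,|w|\ge\varepsilon\,\}$, which is a compact subset of $\X$ because $w\to0$ at $\partial\X$. Truncation is $1$‑Lipschitz, so $|Dw_\varepsilon|\le|Dw|$ a.e.\ and $Dw_\varepsilon\to Dw$ a.e., whence $\mathscr E_{_\X}[w-w_\varepsilon]\to0$ by dominated convergence while $\|w-w_\varepsilon\|_\infty\le\varepsilon$; mollifying each $w_\varepsilon$ gives $\mathscr C^\infty_0(\X)$‑functions converging to it in the Royden norm. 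Hence $w\in\mathscr R_\circ(\X)$, i.e.\ $h\in g+\mathscr R_\circ(\X)$.

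Finally, the energy identity is just orthogonality. Approximating $h-g\in\mathscr R_\circ(\X)$ by $\mathscr C^\infty_0(\X)$‑functions and using the Cauchy--Schwarz bound $\bigl|\iint_\X Du\cdot D\varphi\,\dtext x\,\dtext y\bigr|\le\mathscr E_{_\X}[u]^{1/2}\mathscr E_{_\X}[\varphi]^{1/2}$ together with the harmonicity of $h$ yields $\iint_\X Dh\cdot D(h-g)\,\dtext x\,\dtext y=0$, i.e.\ $\iint_\X Dg\cdot Dh\,\dtext x\,\dtext y=\mathscr E_{_\X}[h]$. Expanding $\mathscr E_{_\X}[g-h]=\mathscr E_{_\X}[g]-2\iint_\X Dg\cdot Dh\,\dtext x\,\dtext y+\mathscr E_{_\X}[h]$ then gives $\mathscr E_{_\X}[g-h]=\mathscr E_{_\X}[g]-\mathscr E_{_\X}[h]$; both sides are nonnegative and the right‑hand side is $\le\mathscr E_{_\X}[g]$. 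The only point needing genuine care is the boundary continuity in the identification step, but for Jordan domains this is entirely classical — via the continuum barrier or via the conformal reduction above — so the whole argument is routine.
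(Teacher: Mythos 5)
The paper does not actually prove this lemma; it is invoked as a ``well-known fact'' with a citation to \cite{IKO1}, so there is no in-house argument to compare against. Your overall route --- Dirichlet principle in the affine class $g+\W^{1,2}_\circ(\X)$, identification of the minimizer $u$ with the continuous harmonic extension $h$, truncation to upgrade $h-g\in\W^{1,2}_\circ(\X)$ to $h-g\in\mathscr R_\circ(\X)$, and orthogonality for the Pythagorean energy identity --- is the standard one, and Steps~I, III and IV are carried out correctly. (A cosmetic point: $g$ is $\C$-valued, so the truncation $w_\varepsilon$ must be applied to $\re w$ and $\im w$ separately.)

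The place that is genuinely under-argued is the identification $u=h$ in Step~II. After transporting to a circular domain you write that ``the solution is given by an explicit Poisson-type integral that is continuous up to the boundary; thus $u$ extends continuously to $\overline\X$ with $u=g$ on $\partial\X$.'' But the Poisson extension and the variational minimizer are, a priori, different objects: $h$ is continuous up to $\partial\X$ by construction, while $u$ satisfies only the weak boundary condition $u-g\in\W^{1,2}_\circ$, and the maximum principle cannot be applied until one already knows that $u$ has continuous boundary values equal to $g$. That is precisely the nontrivial content being cited, not derived. The cleanest repair is to reverse the logic: first show that $h$ has finite Dirichlet energy --- in a circular domain this follows at once from the Douglas formula (which you use later in the paper anyway) or a Fourier computation, since the trace $g\vert_{\partial\X}$ lies in $H^{1/2}$ --- and then apply your own truncation argument to $g-h$ to conclude $h\in g+\W^{1,2}_\circ(\X)$. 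Since $h$ is harmonic it is then automatically the unique minimizer, $u=h$ follows, and the energy identity drops out of orthogonality exactly as in your Step~IV. With that reordering the proof closes; as written, Step~II quietly assumes the conclusion it is meant to establish.
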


\subsubsection{Inner Chordarc Domains}

 \begin{definition}\label{ICA}
A finitely connected Jordan domain $\Y$ is \textit{inner chordarc}  if there exists a constant $C$ with the following property. Suppose that $a,b$ belong to the same boundary component of $\Y$  and $\gamma \subset \Y$ is  an open Jordan arc with endpoints at $\,a\,$ and $\,b\,$. Then the shortest connection from $a$ to $b$ along $\partial \Y$ has length at most $\,C \cdot\textnormal{length}({\gamma})$.
 \end{definition}

 Inner chordarc domains were studied in Geometric Function Theory since 1980s~\cite{FH, HS, La, Po, Tu, Va, Va2}.  They are more general than Lipschitz domains. For instance they allow inward cusps and logarithmic spiraling, see Figure~\ref{exa1}.

 \begin{theorem}\label{thmICA}\cite{Va}
A simply connected Jordan domain   $\,\Omega\,$ is inner chordarc if and only if there exists a  $\,\mathscr C^1\,$-diffeomorphism $F$ from $\Omega$ onto the unit disk $\mathbb D$  that extends to a homeomorphism $\,F \colon \overline{\Omega} \onto \overline{\mathbb D}\,$ such that  both gradient matrices $\,DF\,$ and $\,(DF)^{-1}\,$ are bounded in $\Omega$.
 \end{theorem}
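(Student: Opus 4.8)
The plan is to prove both implications, the "if" direction being the easy one and the "only if" direction requiring the construction of the diffeomorphism.

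\emph{The easy direction.} Suppose such a diffeomorphism $F\colon \Omega \onto \DD$ exists with $\norm{DF}_\infty$ and $\norm{(DF)^{-1}}_\infty$ bounded by some constant $M$. Given $a,b \in \partial\Omega$ and a Jordan arc $\gamma \subset \Omega$ joining them, the image $F(\gamma)$ is a Jordan arc in $\DD$ joining $F(a), F(b) \in \partial\DD$, and since $F^{-1}$ is $M$-Lipschitz (its derivative is bounded by $M$ on $\DD$, and $\DD$ is convex so one integrates along segments) we get $\mathrm{length}(F(\gamma)) \ge M^{-1}\mathrm{length}(\gamma)$. In the disk the chordarc property is elementary: the shorter boundary arc of $\partial\DD$ between two points has length at most $\pi$ times their Euclidean distance, which in turn is at most $\mathrm{length}(F(\gamma))$. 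Pushing this boundary arc back by the $M$-Lipschitz map $F^{-1}$ (one must check $F^{-1}$ maps $\partial\DD$ onto $\partial\Omega$, which follows from the homeomorphic extension) gives a connection from $a$ to $b$ along $\partial\Omega$ of length at most $M \cdot \mathrm{length}(F(\gamma)) \le M^2 \pi\, \mathrm{length}(\gamma)$. Hence $\Omega$ is inner chordarc with $C = \pi M^2$.

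\emph{The hard direction.} This is where the real work lies; I would invoke the geometric function theory literature, following V\"ais\"al\"a \cite{Va} and the related references \cite{FH, HS, La, Po, Tu, Va2}. The key point is that an inner chordarc domain, equipped with its \emph{inner (geodesic) metric}, is quasiconvex, and its boundary in the inner metric is a quasicircle; equivalently, $\Omega$ is the image of $\DD$ under a bi-Lipschitz-in-the-inner-metric homeomorphism that is quasiconformal. Concretely, one first produces a Riemann map $\psi \colon \DD \onto \Omega$; the inner chordarc condition is exactly what is needed to show that $\psi$ extends to a homeomorphism $\overline{\DD}\onto\overline{\Omega}$ whose boundary values are bi-Lipschitz from $\partial\DD$ (arclength) to $\partial\Omega$ (inner-metric arclength). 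One then smooths: the Beurling--Ahlfors or a Radó--Kneser--Choquet-type harmonic extension of this boundary homeomorphism, or a direct mollification in suitable local coordinates, yields a $\mathscr C^1$ diffeomorphism $G\colon \DD \onto \Omega$ with $DG$ and $(DG)^{-1}$ bounded with respect to the inner metric on $\Omega$. Finally one notes that boundedness of $(DG)^{-1}$ in the inner metric is equivalent to boundedness in the Euclidean metric precisely because the inner metric is comparable to a quasiconvex (hence Euclidean-comparable on the relevant scales) metric; setting $F = G^{-1}$ gives the desired diffeomorphism.

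The main obstacle is the existence and regularity of $G$: one must control the distortion of the smoothing process uniformly up to the boundary, and one must carefully track the difference between the Euclidean and inner metrics, since inward cusps and logarithmic spiraling (the very examples that distinguish inner chordarc from Lipschitz domains) make the two metrics genuinely non-bi-Lipschitz near $\partial\Omega$ — yet the diffeomorphism $F$ we seek has Euclidean-bounded derivatives. The resolution is that $F$ is allowed to be a diffeomorphism only of the \emph{open} domains: near a cusp, $DF$ stays bounded because $F$ \emph{contracts} the pinched region, while $(DF)^{-1}$ stays bounded because the geodesic distance across the cusp is large. Since this is precisely the content of \cite[Theorem~4.x]{Va} (stated there in the language of QED or uniform domains), I would cite it rather than reproduce the argument, remarking only on the two points above and on the reduction of the multiply connected case to the simply connected one by a standard conformal decomposition — though here $\Omega$ is assumed simply connected, so no such reduction is needed.
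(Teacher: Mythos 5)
The paper does not prove Theorem~\ref{thmICA}: it is quoted from V\"ais\"al\"a~\cite{Va} and used as a black box, so there is no in-paper proof to compare yours against, and your decision to cite~\cite{Va} for the substantive direction mirrors what the authors do.

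Two remarks on your own argument. For the easy direction the plan (map $\gamma$ into $\DD$, apply the chordarc property of the circle, pull the boundary arc back by $F^{-1}$) and the constant $\pi M^2$ are correct, but there is a slip in the middle: from the Lipschitz bound on $F^{-1}$ you deduce $\mathrm{length}(F(\gamma))\ge M^{-1}\mathrm{length}(\gamma)$, which points the wrong way and is never used in the chain that follows. What your final estimate actually needs is the upper bound $\mathrm{length}(F(\gamma))\le M\,\mathrm{length}(\gamma)$, which follows from $|DF|\le M$ by integrating along $\gamma$ (convexity is irrelevant here; it is needed, and correctly invoked, only to make $F^{-1}$ globally $M$-Lipschitz on $\DD$). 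For the hard direction your sketch starts from a Riemann map $\psi\colon\DD\onto\Omega$ and proposes to smooth its boundary values; but for the cuspidal and spiraling examples the paper itself draws, a conformal map onto $\Omega$ has unbounded distortion near the bad boundary point, so no amount of boundary smoothing can produce the required \emph{global} bounds on $DF$ and $(DF)^{-1}$. V\"ais\"al\"a's construction builds a non-conformal map that is bi-Lipschitz from the inner metric of $\Omega$ to the Euclidean metric of $\DD$ directly, bypassing the Riemann map; that is the missing idea. Since both you and the paper ultimately cite~\cite{Va}, this does not invalidate the proposal, but the heuristic as written would not lead to a correct proof.
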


\begin{center}\begin{figure}[h]
\includegraphics[width=0.9\textwidth]{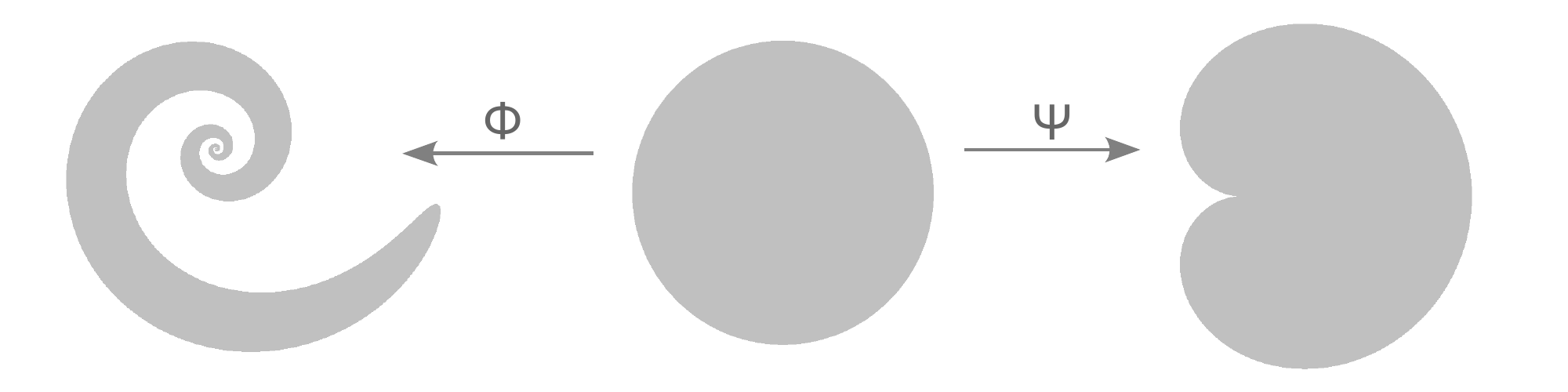} \caption{Inner chordarc domains}\label{exa1}
\end{figure}\end{center}

Fugure~\ref{exa1} illustrates two such mappings from the unit disk onto a non-Lipschitz domain:  
$\Phi(z)=|z-1|^{4i}(z-1)$ (the image is a spiral domain) and $\Psi(z)=(z+1)^2/|z+1|$ (the image contains an inward cusp).

\subsubsection{Monotone mappings}

 Recall that a continuous map $\, h \, : \overline{\mathbb X} \onto\,\overline{\mathbb Y}\,$ is monotone if the preimage  of every continuum (connected compact set) in $\,\overline{\mathbb Y}\,$ is a continuum in $\overline{\mathbb X}\,$.  A point $\,y_\circ \in \overline{\Y}\,$ is said to be a \textit{simple value} of $\,h\,$ if its preimage  $\,h^{-1}\{y_\circ\}\,$ is a single point in $\,\overline{ \mathbb X}\,$.

 We need an elementary lemma.

 \begin{lemma}\label{Monotone}
 Let   $\,h\, :\,\mathbb X \onto \mathbb Y\,$ be a homeomorphism between $\,\ell\,$-connected Jordan domains and suppose that it extends continuously up to the boundary. Then  the extended map  $\,h\, :\,\overline{\mathbb X} \onto \overline{\mathbb Y}\,$ is  monotone. Furthermore, the inverse map $\,h^{-1} :\,\mathbb Y \onto \mathbb X\,$ extends continuously up to  simple values of $\,h\,$ which are virtually all points in $\,\partial \mathbb Y\,$, except for a countable number of them.
  \end{lemma}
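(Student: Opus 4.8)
The plan is to prove Lemma~\ref{Monotone} in two parts, corresponding to its two sentences.

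\textbf{Monotonicity of the extension.} First I would show that the continuous extension $h\colon\overline{\X}\onto\overline{\Y}$ is monotone. Let $K\subset\overline{\Y}$ be a continuum; we must show $h^{-1}(K)$ is connected. The natural approach is to exploit that $h$ restricted to the open domain $\X$ is a homeomorphism, so on $\X$ the preimage behaves well, and the only issue is how the preimage meets $\partial\X$. Write $h^{-1}(K)=E$ and suppose, for contradiction, that $E=E_1\sqcup E_2$ with $E_1,E_2$ nonempty, disjoint, relatively closed in $\overline{\X}$ (hence compact). Since $\X$ is open and $h|_{\X}$ is a homeomorphism onto $\Y$, the set $h^{-1}(K\cap\Y)=E\cap\X$ is homeomorphic to $K\cap\Y$; but $K\cap\Y$ need not be connected, so this alone does not suffice. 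Instead I would argue using the fact that $\overline{\X}$ and $\overline{\Y}$ are finitely connected Jordan domains, hence locally connected continua (Carath\'eodory), and that a continuous surjection between compacta is monotone iff point preimages are connected and $h$ is, in an appropriate sense, ``light-free.'' Concretely: since $h|_{\partial\X}\colon\partial\X\to\partial\Y$ is a continuous surjection between finite disjoint unions of circles that is a ``boundary extension'' of a homeomorphism, one shows each fiber $h^{-1}\{y\}$ is connected by separately handling $y\in\Y$ (a single point) and $y\in\partial\Y$ (a connected subarc of $\partial\X$, by the classical theory of prime ends / boundary behavior of conformal-type maps). Then one upgrades fiberwise connectedness to full monotonicity on the locally connected target using the standard fact (see \cite{Mor}) that a continuous surjection $f\colon X\to Y$ between compact metric spaces with $Y$ locally connected is monotone provided all fibers are connected \emph{and} $f$ does not ``raise dimension across the boundary,'' which here follows because $h$ is a homeomorphism on the dense open set $\X$. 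I would phrase this as: $\overline{\X}\setminus h^{-1}\{y\}$ has exactly the same number of components as $\overline{\Y}\setminus\{y\}$ for every $y$, which forces connectedness of preimages of continua by an inductive decomposition of $K$ into sub-continua.

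\textbf{Continuity of $h^{-1}$ at simple values.} For the second sentence, fix a simple value $y_\circ\in\partial\Y$, so $h^{-1}\{y_\circ\}=\{x_\circ\}$ is a single point, necessarily in $\partial\X$ (since $h$ maps $\X$ onto $\Y$). Continuity of $h^{-1}$ at $y_\circ$ means: for every neighborhood $U$ of $x_\circ$ in $\overline{\X}$ there is a neighborhood $V$ of $y_\circ$ in $\overline{\Y}$ with $h^{-1}(V)\subset U$. This is a general topological fact: a continuous bijection-on-fibers-at-a-point from a compact space is ``continuous there'' whenever the fiber is a single point. Precisely, if it failed there would be a sequence $y_n\to y_\circ$ with $h^{-1}(y_n)\ni x_n\notin U$; by compactness of $\overline{\X}$ pass to $x_n\to x_*\notin U$, and by continuity of $h$, $h(x_*)=\lim h(x_n)=\lim y_n=y_\circ$, so $x_*\in h^{-1}\{y_\circ\}=\{x_\circ\}\subset U$, a contradiction. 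Hence $h^{-1}$ extends continuously to every simple value.

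\textbf{Simple values are all but countably many boundary points.} It remains to show the set $N\subset\partial\Y$ of non-simple values is countable. For $y\in\partial\Y$, the fiber $h^{-1}\{y\}$ is, by the monotonicity just established, a connected compact subset of $\partial\X$, i.e.\ a point or a nondegenerate subarc of one of the $\ell$ boundary circles. If $y$ is not simple, this arc is nondegenerate and has positive length. Distinct non-simple values have \emph{disjoint} fibers (since $h$ is single-valued), hence give disjoint nondegenerate subarcs of the finite-length curve $\partial\X$. A family of pairwise disjoint nondegenerate arcs in a rectifiable curve is necessarily countable (for each $m$ only finitely many can have length $\ge 1/m$). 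Therefore $N$ is countable, completing the proof. The main obstacle is the first part: rigorously deducing full monotonicity (preimages of \emph{all} continua connected) from the fiberwise statement, for which I would lean on the classical monotone-decomposition theory over locally connected continua and the density of the homeomorphic part $\X\subset\overline{\X}$.
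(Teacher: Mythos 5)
The paper offers no proof of this lemma (it is stated as ``elementary''), so I assess your argument on its own merits. Parts two and three are essentially sound, but part one has a genuine gap.

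\textbf{Monotonicity (part one).} Your worry about ``upgrading fiberwise connectedness to full monotonicity'' is misplaced, and the criterion you invoke (local connectedness of the target, ``light-free,'' ``not raising dimension across the boundary'') is not the relevant theorem and is not correct as stated. The upgrade is in fact a two-line argument with no side conditions: if $f\colon X\onto Y$ is a continuous surjection between compact metric spaces and every fiber $f^{-1}(y)$ is connected, and $C\subset Y$ is a continuum with $f^{-1}(C)=A\sqcup B$ a separation into nonempty compacta, then each fiber lies entirely in $A$ or entirely in $B$, whence $C=f(A)\sqcup f(B)$ is a separation of $C$ into nonempty compacta --- a contradiction. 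What is \emph{not} trivial, and what you do not actually prove, is that the fibers over boundary points are connected. Citing ``the classical theory of prime ends / boundary behavior of conformal-type maps'' is not an argument here: $h$ is not conformal, and prime-end theory gives a homeomorphic boundary correspondence for conformal maps onto Jordan domains, which is a different (and stronger) conclusion than what you need for an arbitrary homeomorphism with a merely continuous boundary extension. A real proof requires a plane-topology argument: e.g., show via winding numbers/degree that the boundary map $h\colon\mathfrak X_j\to\Upsilon_j$ is a degree-one monotone circle map, or run a crosscut argument to show that two boundary points mapping to the same $y_\circ$ cannot be separated on $\mathfrak X_j$ by points mapping elsewhere. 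As written, this step is a gap.

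\textbf{Continuity at simple values (part two).} Your compactness argument is correct: if $y_n\to y_\circ$ in $\Y$ and $x_n=h^{-1}(y_n)$ stayed outside a neighborhood $U$ of $x_\circ$, a convergent subsequence $x_n\to x_*\in\overline{\X}\setminus U$ would force $h(x_*)=y_\circ$ and hence $x_*=x_\circ\in U$, a contradiction.

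\textbf{Countability of non-simple values (part three).} The idea is right --- nondegenerate fibers are pairwise disjoint nondegenerate subarcs of $\partial\X$ --- but your justification via ``positive length'' assumes $\partial\X$ is rectifiable, which is not hypothesized (only $\Y$ is inner chordarc, and Lemma~\ref{Monotone} imposes no boundary regularity at all). Replace the length argument with separability: each nondegenerate arc has nonempty relative interior in the Jordan curve, the interiors are pairwise disjoint open subsets of a separable space, hence there are at most countably many. With that fix, part three is complete.
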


 Every  homeomorphism with finite Dirichlet energy has a continuous extension to the boundary~\cite{IO}. Precisely, 

\begin{theorem}\label{Extension}
Every finite energy homeomorphism $\,h\, :\,\mathbb X \onto \mathbb Y\,$ between $\,\ell$ -connected Jordan domains extends to a continuous map between the closures, again denoted by $\,h\, :\,\overline{\mathbb X} \onto \overline{\mathbb Y}\,$. This map is monotone, though the inverse $\,h^{-1}\, :\,\mathbb Y \onto \mathbb X\,$ may not admit continuous extension to the closure of $\,\mathbb Y\,$, unless it also has finite energy.
\end{theorem}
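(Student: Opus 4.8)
Surjectivity and monotonicity of the extension are easy consequences of continuity, so the real content is the continuous extension, which is due to \cite{IO}; here is the argument I would reconstruct. Fix $z_\circ\in\partial\X$. Only the boundary component through $z_\circ$ matters nearby, so, making a conformal change of the domain $\X$---which alters neither $\Y$ nor, by the conformal invariance of the Dirichlet integral in the plane, the energy $\mathscr E_\X[h]$---I may assume $\partial\X$ is a smooth arc near $z_\circ$. Then for small $r$ the set $\X\cap B(z_\circ,r)$ is a single ``lens'' $\X_r$, cut off by the crosscut $\alpha_r:=\X\cap S(z_\circ,r)$, with $\diam\X_r\to0$ as $r\to0$. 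From Cauchy--Schwarz, $\bigl(\int_{\alpha_r}|Dh|\,\dtext s\bigr)^2\le 2\pi r\int_{\alpha_r}|Dh|^2\,\dtext s$, and a pigeonhole over $r\in(\rho,\sqrt\rho)$---the integral of $\tfrac1r\bigl(\int_{\alpha_r}|Dh|\,\dtext s\bigr)^2$ there is at most $2\pi$ times the energy of $h$ on the half-annulus $\rho<|z-z_\circ|<\sqrt\rho$, which tends to $0$ as $\rho\to0$---produces radii $r_n\downarrow0$, with $h$ absolutely continuous on $S(z_\circ,r_n)$, along which $\operatorname{length}(\beta_n)\le\int_{\alpha_n}|Dh|\,\dtext s\to0$; here $\alpha_n:=\alpha_{r_n}$, $\X_n:=\X_{r_n}$ (nested after passing to a subsequence), $\beta_n:=h(\alpha_n)$, $\Y_n:=h(\X_n)$, so in particular $\diam\beta_n\to0$.

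The heart of the matter is to show that the cluster set $C:=\bigcap_n\overline{\Y_n}$---a nonempty continuum, being a decreasing intersection of the continua $\overline{\Y_n}$---is a single point. Every point of $C$ lies on $\partial\Y$: a point $y\in C\cap\Y$ would have $h^{-1}(y)\in\X_n\cup\alpha_n$ for each $n$, hence $h^{-1}(y)\in\bigcap_n\overline{\X_n}=\{z_\circ\}\subset\partial\X$, which is absurd. Since $\partial\Y$ is a Jordan curve, $C$ is either a point or a nondegenerate arc. To rule out the latter, suppose $a\ne b$ lie in $C$. Each $\beta_n$ is a crosscut of $\Y$ and $\Y_n$ is one of the two Jordan subdomains into which it divides $\Y$, so $\partial\Y\cap\overline{\Y_n}$ is a closed subarc of $\partial\Y$ whose two endpoints are the endpoints of $\beta_n$ and therefore lie within $\diam\beta_n\to0$ of each other; combining this with the area bound $|\Y_n|=\int_{\X_n}J_h\le\tfrac12\int_{\X_n}|Dh|^2\to0$---valid because a planar $\W^{1,2}$-homeomorphism satisfies the Lusin condition $(N)$ and the energy integral is absolutely continuous---one excludes both possible positions of $\Y_n$ inside $\Y\setminus\beta_n$, contradicting $a,b\in\overline{\Y_n}$; this verification, which needs care when $\partial\Y$ is geometrically wild, is carried out in \cite{IO}. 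Hence $C=\{y_\circ\}$, so $\diam\overline{\Y_n}\to0$ and $h(z)\to y_\circ$ as $\X\ni z\to z_\circ$. Performing this at every boundary point gives the continuous extension $h\colon\overline{\X}\onto\overline{\Y}$, which is onto because $h(\overline{\X})$ is compact and lies between $\Y$ and $\overline{\Y}$; and it is monotone by Lemma~\ref{Monotone}.

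For the last assertion: if $h^{-1}\in\W^{1,2}(\Y,\X)$, then $h^{-1}$ extends continuously by what was just proved. Otherwise one exhibits an example. Take $\X=\Y=\mathbb D$ and, in polar coordinates, $h(\rho e^{i\theta})=\rho\,e^{i\Theta(\rho,\theta)}$ with $\Theta(\rho,\theta)=(1-\rho)\,\theta+\rho\,\phi(\theta)$, where $\phi\colon[0,2\pi]\to[0,2\pi]$ is nondecreasing, piecewise linear and onto, with $\phi(0)=0$, $\phi(2\pi)=2\pi$, and $\phi\equiv\alpha_\circ$ on a subinterval $[\alpha_\circ,\beta_\circ]$. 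For each $\rho<1$ the map $\theta\mapsto\Theta(\rho,\theta)$ is strictly increasing, so $h$ is an orientation-preserving self-homeomorphism of $\mathbb D$; since $|Dh|\le C\,(1+|\phi'(\theta)|)\in L^2(\mathbb D)$, one has $h\in\W^{1,2}(\mathbb D)$; and $\Theta(\rho,\cdot)\to\phi$ uniformly as $\rho\to1$, so the continuous extension collapses the arc $\{e^{i\theta}:\theta\in[\alpha_\circ,\beta_\circ]\}$ onto the single point $e^{i\alpha_\circ}$. Consequently the cluster set of $h^{-1}$ at $e^{i\alpha_\circ}$ is this whole arc, so $h^{-1}$ admits no continuous extension to $\overline{\mathbb D}$.

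The genuinely delicate point is the one flagged above: crosscuts with image of small length must cut off image neighborhoods of small diameter. A short image crosscut by itself does not force this---a thin tentacle capped by a short arc is the obstruction---so one really has to use both that $\Y$ is a Jordan domain, which confines the cluster set to the Jordan curve $\partial\Y$, and the area estimate. This is precisely the point at which the hypothesis that $\X$ and $\Y$ be Jordan domains is used.
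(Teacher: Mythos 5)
The paper does not actually prove Theorem~\ref{Extension}: it states the result as a recollection and cites~\cite{IO} for it, so there is no ``paper's own proof'' against which to match step by step. Your proposal is therefore to be judged as an independent reconstruction of the argument in~\cite{IO}, and as such it follows the standard and essentially correct route: the Cauchy--Schwarz/pigeonhole oscillation estimate on circular crosscuts $\alpha_r = \X\cap S(z_\circ,r)$, producing radii $r_n\downarrow0$ with $\operatorname{length}(h(\alpha_{r_n}))\to0$; the observation that the cluster set $C=\bigcap_n\overline{\Y_n}$ is a subcontinuum of $\partial\Y$; and the use of the Jordan curve structure together with the area bound $|\Y_n|\to0$ to force $C$ to be a single point. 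You are right to flag the thin-tentacle issue as the genuinely delicate step and to defer its full verification to~\cite{IO}; a fully self-contained treatment would in addition need to address the fact that the crosscut $\beta_n=h(\alpha_n)$ only acquires well-defined, \emph{a priori} possibly coinciding, endpoints on $\partial\Y$ after one knows $h|_{\alpha_n}$ is absolutely continuous, which is again supplied by Fubini and the energy bound. The worked example $h(\rho e^{i\theta})=\rho e^{i[(1-\rho)\theta+\rho\phi(\theta)]}$ with $\phi$ nondecreasing, piecewise linear and having a flat interval is a correct and clean illustration of the final assertion (one computes $|Dh|^2 = 1+\rho^2(\phi(\theta)-\theta)^2 + ((1-\rho)+\rho\phi'(\theta))^2$, which is bounded since $\phi'$ is); the paper itself does not supply such an example. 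In short, the proposal is a reasonable reconstruction, with the hard geometric lemma appropriately attributed rather than reproved.
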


\subsubsection{Harmonic Homeomorphisms}

We shall make use of the following strong version of the Rad\'{o}-Kneser-Choquet theorem, see~\cite[\S 3.2]{Dub}.
\begin{theorem}[Rad\'{o}-Kneser-Choquet]\label{RKC}
Let $\,\mathbb U \subset \mathbb C \,$ be a simply connected Jordan domain and $\,\Omega \subset \mathbb C\,$ a bounded convex domain. Suppose we are  given a continuous monotone map $\,h : \partial \,\mathbb U \onto \partial \,\Omega\,$, not necessarily a homeomorphism.  Then its continuous harmonic extension, denoted by $\,H : \overline{\mathbb U} \rightarrow \mathbb C\,$, defines a $\,\mathscr C^\infty$ -diffeomorphism $\,H : \mathbb U \onto\Omega\,$.
\end{theorem}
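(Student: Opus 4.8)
The plan is to reduce to the case $\mathbb U=\mathbb D$ and then run the classical Rad\'o--Kneser--Choquet argument, upgraded to a merely monotone boundary map. For the reduction: by the Carath\'eodory theorem a Riemann map $\varphi\colon\mathbb D\onto\mathbb U$ extends to a homeomorphism $\overline{\mathbb D}\onto\overline{\mathbb U}$; planar harmonicity being conformally invariant, $H\circ\varphi$ is harmonic in $\mathbb D$, continuous on $\overline{\mathbb D}$, and equals $h\circ\varphi$ on $\partial\mathbb D$, so by uniqueness for the Dirichlet problem it is the harmonic extension of the continuous monotone surjection $h\circ\varphi\colon\partial\mathbb D\onto\partial\Omega$; moreover $H$ is a $\mathscr C^\infty$-diffeomorphism onto $\Omega$ iff $H\circ\varphi$ is. (Existence of the continuous harmonic extension itself is the classical solvability of the Dirichlet problem for continuous boundary data, i.e. the Poisson integral on $\mathbb D$.) So assume $\mathbb U=\mathbb D$ and write $H=(u,v)$.

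The core is to show $J_H=u_xv_y-u_yv_x$ never vanishes in $\mathbb D$. For a real linear functional $\lambda\colon\mathbb C\to\mathbb R$ put $w_\lambda=\lambda\circ H$; this is a real harmonic function, continuous on $\overline{\mathbb D}$, and non-constant (otherwise $\partial\Omega=h(\partial\mathbb D)$ would lie on a line, impossible for the boundary of a bounded convex domain). Two geometric facts enter: (i) convexity of $\Omega$ makes $\{p\in\partial\Omega:\lambda(p)>c\}$ a subarc of $\partial\Omega$ for each $c\in\mathbb R$, since an open half-plane meets the convex Jordan curve $\partial\Omega$ in a connected set; (ii) monotonicity of $h$ makes the preimage of a subarc of $\partial\Omega$ a subarc of $\partial\mathbb D$ (the preimage of its closure is a continuum, and removing the endpoint fibres keeps it connected). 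Hence $\{\zeta\in\partial\mathbb D:w_\lambda(\zeta)>c\}$ and $\{\zeta:w_\lambda(\zeta)<c\}$ are subarcs of $\partial\mathbb D$ for every $c$. Now I invoke the Kneser--Choquet lemma in the form of \cite[\S3.2]{Dub}: a non-constant harmonic function on $\mathbb D$, continuous up to the boundary and with all boundary superlevel and sublevel sets connected, has no critical point in $\mathbb D$. (Outline of the lemma: at an interior critical point $z_0$ with value $c$, the level set $\{w=c\}$ emits at least four analytic branches from $z_0$, cutting a small disk about $z_0$ into $\ge 4$ sectors lying alternately in $\{w>c\}$ and $\{w<c\}$; by the maximum principle $\min_{\partial\mathbb D}w<c<\max_{\partial\mathbb D}w$, so both $\{w|_{\partial\mathbb D}>c\}$ and $\{w|_{\partial\mathbb D}<c\}$ are non-empty proper subsets, and a short count of the components of $\{w>c\}\cap\mathbb D$ and $\{w<c\}\cap\mathbb D$ forces one of them to be disconnected---a contradiction.) Applying this to every $w_\lambda$ gives $\nabla w_\lambda(z_0)\ne0$ for all $z_0\in\mathbb D$ and all $\lambda$. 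If $J_H(z_0)=0$, then $\nabla u(z_0)$ and $\nabla v(z_0)$ are linearly dependent, so $\nabla w_\lambda(z_0)=0$ for some non-trivial $\lambda$, and that $w_\lambda$ is non-constant; contradiction. Thus $J_H\ne0$ on the connected domain $\mathbb D$, hence has constant sign, and since $H$ is real-analytic the inverse function theorem makes $H$ a $\mathscr C^\infty$ local diffeomorphism of $\mathbb D$, in particular an open map.

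It remains to see that $H$ maps $\mathbb D$ bijectively onto $\Omega$, by a degree count. For $q\in\mathbb C\setminus\partial\Omega$ the number of $H$-preimages of $q$ in $\mathbb D$, each counted with the constant sign of $J_H$, equals the winding number of $H|_{\partial\mathbb D}=h$ about $q$. Since $h$ is a monotone surjection onto the Jordan curve $\partial\Omega$ it traverses $\partial\Omega$ exactly once (its degree as a map of oriented Jordan curves is $\pm1$), so this winding number is $\pm1$ for $q\in\Omega$ and $0$ for $q\notin\overline\Omega$; hence every point of $\Omega$ has exactly one preimage and no point outside $\overline\Omega$ has a preimage. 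Combined with compactness of $H(\overline{\mathbb D})$, the identity $H(\partial\mathbb D)=\partial\Omega$, and openness of $H(\mathbb D)$ (which also rules out interior points mapping to $\partial\Omega$), this yields $H(\mathbb D)=\Omega$ with $H|_{\mathbb D}$ injective; together with the previous paragraph, $H\colon\mathbb D\onto\Omega$ is a $\mathscr C^\infty$-diffeomorphism. Note that no injectivity of $H$ on $\overline{\mathbb D}$ is claimed, consistently with $h$ not being a homeomorphism.

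The step I expect to be the main obstacle is the non-vanishing of $J_H$: one must combine convexity of $\Omega$ with the fact that $h$ is only monotone (not injective) to obtain connectedness of \emph{all} boundary superlevel sets of the harmonic functions $w_\lambda$, and then run the Kneser topological argument carefully enough to exclude interior critical points. The bookkeeping specific to non-injective boundary data---checking non-constancy of the relevant $w_\lambda$, and that $h$ winds exactly once around $\Omega$---is what separates this ``strong'' version from the textbook Rad\'o--Kneser--Choquet theorem.
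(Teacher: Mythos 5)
The paper does not prove this theorem; it is stated as a known result and cited to Duren \cite[\S 3.2]{Dub}, so there is no in-paper proof to compare against. Your proposal is a correct reconstruction of the standard Kneser--Choquet argument in essentially the form it appears in Duren: reduce to $\mathbb{U}=\mathbb{D}$ via a Riemann map together with Carath\'eodory's extension theorem; for each non-trivial real linear functional $\lambda$ show that $w_\lambda=\lambda\circ H$ is a non-constant harmonic function whose boundary superlevel and sublevel sets are arcs (convexity of $\Omega$ giving connectedness of half-plane slices of $\partial\Omega$, monotonicity of $h$ giving connectedness of their preimages); invoke the Kneser lemma to exclude interior critical points of every $w_\lambda$ and hence zeros of $J_H$; finish with a degree count using that a monotone surjection of circles has degree $\pm1$. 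One small place deserving an explicit argument is the parenthetical ``removing the endpoint fibres keeps it connected'' in step (ii): the cleanest justification is the structure fact that a continuous monotone surjection of $\partial\mathbb{D}$ onto a Jordan curve admits a nondecreasing lift $\tilde h\colon\mathbb{R}\to\mathbb{R}$ with $\tilde h(\theta+2\pi)=\tilde h(\theta)+2\pi$, from which it follows directly that preimages of open subarcs are open subarcs. With that made explicit, the proposal matches the cited proof.
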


 \section{Proof of Theorem \ref{Main}}

\begin{center}\begin{figure}[h]
\includegraphics[width=0.33\textwidth]{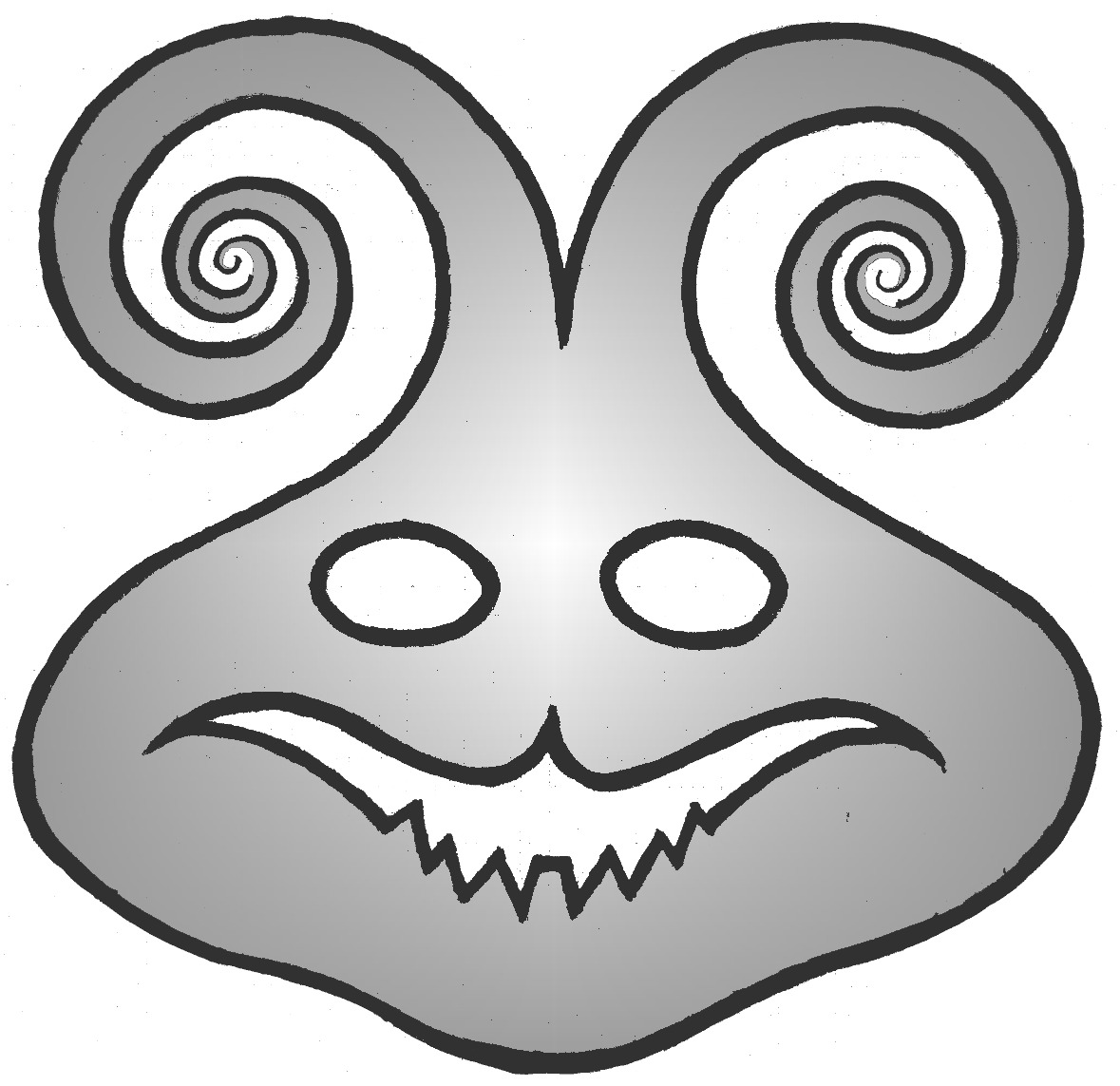} \caption{A finitely connected inner chordarc domain that is not Lipschitz}\label{ram}
\end{figure}\end{center}

We reserve the notation,
$$\mathfrak X_1, \mathfrak X_2, \dots, \mathfrak X_\ell \;,\;\;\;\;\;\; \textnormal{for the components of}  \;\;\partial \X $$
$$\Upsilon_1, \Upsilon_2, \dots, \Upsilon _\ell  \;,\;\;\;\;\;\textnormal{for the components of} \;\; \partial \Y$$
The components are numbered so that $h$ sends $\mathfrak X_\nu$ onto $\Upsilon_\nu$ for $\nu=1,\dots,\ell$.
Figure~\ref{ram} illustrates a domain $\Y$ that satisfies the assumptions of the theorem.

We recall that the boundary map $\, h : \partial \,\mathbb X \onto \partial\,\mathbb Y\,$
is injective on a compact subset $\,\mathfrak X\ \subset \partial\,\mathbb X = \mathfrak X_1 \cup \mathfrak X_2 \cup \dots \cup \mathfrak X_\ell\,$, possibly empty.
Let us fix an $\,\varepsilon > 0\,$ and  a compact $\mathbb G \subset \mathbb X\,$.
We need to construct a homeomorphism $\,h_\varepsilon :\overline{\mathbb X} \onto \overline{\mathbb Y}\,$ of Sobolev class $\,\mathscr W^{1,2}(\mathbb X , \mathbb Y)\,$ which coincides with $\,h\,$ on $\,\mathfrak X \cup \mathbb G\,$ and
\begin{equation}\label{H}
 \| h_\varepsilon - h \|_{\mathscr R(\mathbb X)}\; \preccurlyeq \varepsilon\;
\end{equation}
 
Hereafter the symbol $\,\preccurlyeq\,$ indicates that the inequality holds with an \emph{implied} multiplicative constant.
The implied constants will vary from line to line but remain \textbf{independent of $\,\varepsilon\,$} as long as $\,\varepsilon\,$ is sufficiently small. 

To prove~\eqref{H} we shall set up a chain of homeomorphisms $\,h^0 ,\, h^1 ,\dots,\, h^\ell \, : \mathbb X \onto \mathbb Y\,$  in the Sobolev space $\,\mathscr W^{1,2}(\mathbb X , \mathbb Y )\,$ whose continuous extensions, still denoted by $\,h^0 ,\, h^1 ,\dots,\, h^\ell \, : \overline{\mathbb X} \onto \overline{\mathbb Y}\,$, satisfy\\

 \begin{itemize}
 \item $\,h^0 \equiv h\,$\\
 \item $\, h^1 \,: \mathfrak X_1 \cup \mathfrak X \,\rightarrow \partial\,\mathbb Y\,$ is injective,\\ \;\; $\,h^1 \equiv h\,$  on  $\,\mathfrak X\cup\mathbb G\,$, \\
     $
     \| h^1 - h^0 \|_{\mathscr R(\mathbb X)}\; \preccurlyeq \varepsilon\,$\\
      \item For $2\le \nu \le \ell$,  \\ 
      $\, h^\nu \,: \mathfrak X_1 \cup \dots \cup \mathfrak X_\nu \cup\mathfrak X \,\rightarrow \partial\,\mathbb Y\,$ is injective,\\ \;\; $\,h^\nu \equiv h^{\nu - 1}\,$  on  $\,\mathfrak X_1\cup \dots \cup \mathfrak X_{\nu - 1}\cup \mathfrak X \cup \mathbb G\,$, \\
     $
     \| h^\nu - h^{\nu - 1} \|_{\mathscr R(\mathbb X)}\; \preccurlyeq \varepsilon
     $
      \end{itemize}
Thus the final term $\,h^\ell\,$ works for the desired homeomorphism $\,h_\varepsilon :\overline{\mathbb X} \onto \overline{\mathbb Y}\,$, by the triangle inequality.  We proceed by induction on~$\nu$.
The induction begins with $h^0$, which is obvious.  Suppose we are given the map $\,h^{\nu} :\overline{\mathbb X} \onto \overline{\mathbb Y}$ for some 
$0\leqslant \nu < \ell \,$. The construction of  $\,h^{\nu+1} :\overline{\mathbb X} \onto \overline{\mathbb Y}\,$ will be made in 5 steps.\\

\textbf{Step I.} (\textit{Transition to the case when $\,\Upsilon_{\nu+1} = \mathbb T\,$ is the outer boundary}).
To make this transition rigorous, let us perform the following transformations of $\,\overline{\mathbb Y}\,$. First, we reduce ourselves to the case in which  $\,\Upsilon_{\nu+1}\,$ is the outer boundary of $\,\mathbb Y\,$ by applying an inversion if necessary. Once  $\, \Upsilon _{\nu+1}$ is the outer boundary of $\mathbb Y$  we apply Theorem \ref{thmICA} to transform the bounded component  of $\C\setminus\, \Upsilon _{\nu+1}\,$, denoted by $\,\Omega\,$,   onto the unit disk. Let this transformation be denoted by  $F : \Omega \onto \mathbb D$. This map extends to a homeomorphism  $F : \overline{\Omega} \onto \overline{\mathbb D}$, and has both matrix functions $\,DF\,$ and $\,(DF)^{-1}\,$ continuous and bounded in $\Omega$. Let $\Y'=F(\Y)$. 
By virtue of Lemma~\ref{folklore} the composition with
 the mapping $F\colon\Y\onto \Y'$ transforms converging sequences of mappings $h_k \colon \X\onto \Y$ into converging sequences of mappings $F\circ h_k \colon \X\onto\Y'$. The inverse $F^{-1}$ has the same property; therefore we can work with the target $\Y'$ instead of $\Y$. In what follows we simply assume that $\,\Upsilon _{\nu+1}  = \mathbb T\,$ is the outer boundary of $\,\mathbb Y\,$. \\

\textbf{Step II.} (\textit{Harmonic Replacements Near} $\,\mathfrak X_{\nu+1}\,$).
The idea  is to alter $\,h^{\nu} \,$ in a thin neighborhood of  $\,\mathfrak X_{\nu+1}\,$ to gain  piecewise harmonicity therein. In this step we change  neither the boundary map $\,h^{\nu} \,: \partial\, \mathbb X \onto \partial\, \mathbb Y\,$ nor the  values of $h^\nu$ on the given compact  $\mathbb G \subset \X$.

Recall from Lemma \ref{Monotone} that all but a countable number of points in $\, \mathbb T\,$ are simple values of the map $\,h^{\nu} : \mathfrak X_{\nu+1} \onto \mathbb T\,$. They are dense, so one can  partition $\,\mathbb T\,$ into arbitrarily small \textit{closed} circular arcs whose ends are simple values of $\,h^{\nu} \,$,
$$\;
\mathbb T  \;=\;
   \bigcup^N_{\kappa = 1}\, \mathcal C_\kappa \;,\;\;\;\; \diam \mathcal C_\kappa \leqslant \varepsilon <2 
    \;,\;\;\;\;\;\;\textnormal{for all}\;\;\kappa = 1, \dots, N
   \,$$
Let  $\,\mathbb S_\kappa\,$  denote the segment of the unit disk; the open region between  the arc  $\,\mathcal C_\kappa\,$ and the closed line interval $\, \mathbf I_\kappa\,$ connecting the endpoints of $\mathcal C_\kappa$, which we call the base of $\mathbb S_\kappa$. We require the partition of $\,\mathbb T\,$ to be fine enough so that the compact set $\, h^{\nu} (\mathbb G)\,$ intersects none of the segments $\,\mathbb S_\kappa\,, \;\kappa = 1,\dots,N\,$. One further restriction on the partition comes from the following observation: the finer the partition the closer to $\,\mathbb T\,$ are the segments $\,\mathbb S_\kappa\,$. Since $\,h^{\nu} : \mathbb X \onto \mathbb Y\,$ is a homeomorphism, it follows that the preimages of $\,\mathbb S_\kappa\,$ under $\,h^{\nu} \,$, denoted by $\mathbb X_\kappa\,$,  can be  as close to $\,\mathfrak X_{\nu+1}\,$ as we wish. In particular, we may ensure that
\begin{equation}\label{ENergy}
\sum_{\kappa = 1}^N  \iint_{\mathbb X_\kappa} |D h^{\nu}|^2  \; \leqslant \varepsilon^{\,2}
\end{equation}
That is all what we require to determine the partition of $\mathbb T$. This partition will remain fixed for the rest of the proof.  Now we observe that each $\,\mathbb X_\kappa\,$ is a simply connected  Jordan domain. Its boundary consists of two closed Jordan arcs with common endpoints. The one in $\,\mathfrak X_{\nu+1}\,$  is  denoted by $\,\Gamma_\kappa = \overline{\mathbb X}_\kappa\cap \mathfrak X_{\nu+1} \,$ and the open arc in $\mathbb X\,$ is denoted by $\gamma_\kappa \deff \partial \mathbb X_\kappa \cap \mathbb X\,$. It is at this point that we  take advantage of the condition that the endpoints of $\mathcal C_\kappa$ are simple values of $\, h^{\nu}\,$. This condition implies that the inverse map $\,(h^{\nu} )^{-1} \,: \mathbf I_\kappa \onto \overline{\gamma_\kappa} \,$ is a homeomorphism. On the other hand the preimage  $\,(h^{\nu} )^{-1}(\mathcal C_\kappa)\,\onto  \Gamma_\kappa \subset \mathfrak X_{\nu + 1}\,$   is a closed arc, because of monotonicity of $\,h^{\nu} :\partial\,\mathbb X \onto \partial \,\mathbb Y\,$. Therefore, the open Jordan arc $\,\gamma_\kappa \subset \mathbb X\,$ and  the closed  arc $\, \Gamma_\kappa \subset \mathfrak X_{\nu+1}\,$ form a closed Jordan curve; precisely, the boundary of $\,\mathbb X_\kappa\,$. In summary,
\begin{itemize}
\item $
 \partial\,\mathbb X_\kappa = \gamma_\kappa \cup \Gamma_\kappa\, ,\;\;\;\; h^{\nu} : \mathbb X_\kappa \onto \mathbb S_\kappa\,,
$
 \item
 $ h^{\nu} : \partial\,\mathbb X_\kappa \onto \partial \,\mathbb S_\kappa\,\;\;\textnormal{is continuous and monotone}$
 \item This latter boundary map is injective on the compact subset\\ $\,\mathfrak X_{\nu+1}^\kappa \deff ( \mathfrak X \cap \mathfrak X_{\nu+1} ) \cup \overline{\gamma_\kappa} \subset  \partial\,\mathbb X_\kappa\,$
 \end{itemize}
  Now we appeal to Theorem \ref {RKC} of Rad\'o-Kneser-Choquet  which allows us to replace  $\,h^{\nu} : \mathbb X_\kappa \onto \mathbb S_\kappa\,$ by the harmonic extension of its boundary map $\,h^{\nu} : \partial \,\mathbb X_\kappa \onto \partial \,\mathbb S_\kappa\,$. We need to introduce, for a little while,  more notation.

  \begin{itemize}
   \item $\,h_\kappa^{\nu} : \overline{\mathbb X}_\kappa \onto \overline{\mathbb S}_\kappa\,$ \;\;\;\;\; --- harmonic extension of $\,h^{\nu} : \partial \,\mathbb X_\kappa \onto \partial \,\mathbb S_\kappa\,$
   \item  $\, \textbf{h}^{\nu}  :\mathbb X \onto \mathbb Y \,$ \;\;\;\;\;--- homeomorphism of  class $\mathscr W^{1,2} (\mathbb X ,\mathbb Y)$, defined by
\[
\textbf{h}^{\nu} = 
\begin{cases} 
h^{\nu}_\kappa &  \textrm{on $\, \mathbb X_\kappa\,,\;\;\;\;\kappa = 1, 2, \dots, N $ }\\
h^{\nu} & \textrm{otherwise}
\end{cases}
\]
 \end{itemize}
The continuous extension
$\,\textbf{h}^{\nu} : \overline{\mathbb X} \onto \overline{\mathbb Y} \,$ agrees with $\,h^{\nu} \,$ on $\, \partial\,\mathbb X\,$ and on the compact $\mathbb G\subset \mathbb X\,$ as well.

 Let us estimate the difference $\,h^{\nu} - \textbf{h}^{\nu}\,$ in the norm of Royden algebra. First we find that
$$
\|h^{\nu} - \textbf{h}^{\nu}\|_{\mathscr C^0(\mathbb X)} \leqslant \sup_{1\leqslant \kappa \leqslant N} \|h^{\nu} - \textbf{h}^{\nu}\|_{\mathscr C^0(\mathbb X_\kappa)} \leqslant \sup_{1\leqslant \kappa \leqslant N} \textnormal{diam}\,\mathbb S_\kappa\,  \preccurlyeq \varepsilon
$$
Secondly, in view of  (\ref{Energy}), we see that
$$ \mathscr E_{_\mathbb X} [ h^{\nu} - \textbf{h}^{\nu} ] =  \sum_{\kappa = 1}^{N} \mathscr E_{_{\mathbb X_\kappa} }[ h^{\nu} - h^{\nu}_\kappa ]\,\leqslant   \sum_{\kappa = 1}^{N} \mathscr E_{_{\mathbb X_\kappa} }[ h^{\nu}] \,\preccurlyeq \varepsilon^{\,2}
$$
by  (\ref{ENergy}). Hence
$$
\| h^{\nu} - \textbf{h}^{\nu} \|_{\mathscr R(\mathbb X)} \preccurlyeq \varepsilon
$$
Summarizing, the construction of $\,h^{\nu+1}\,$ in an $\,\varepsilon\,$ proximity to $\,h^\nu\,$ will be done once the similar construction is in hand for  $\,\textbf{h}^{\nu}\,$.
In what follows, instead of using $\,\textbf{h}^{\nu}\,$,  we assume that the original map $\,h^{\nu} \,$ was already harmonic in every $\,\mathbb X_\kappa\,$. This simplifies writing and causes no loss of generality. \\

\textbf{Step III.} (\textit{Reduction of the domain to the unit disk}).

The idea is to construct for each $\,\kappa = 1, 2, \dots , N\,$  a sequence of homeomorphisms $\, h^{\kappa, \,\nu}_j : \overline{\mathbb X}_\kappa \onto \overline{\mathbb S}_\kappa\,, \;j = 1, 2 , \dots \,,\,$ that converge to $\, h^{\nu} : \overline{\mathbb X}_\kappa \onto \overline{\mathbb S}_\kappa\,$ uniformly and in $\,\mathscr W^{1, 2}(\mathbb X_\kappa , \,\mathbb S_\kappa)\,$. In addition to that, we require that each   $\, h^{\kappa, \,\nu}_j\,$ agrees with $\, h^{\nu}\,$ on the compact subset $\,\mathfrak X_{\nu+1}^\kappa \deff ( \mathfrak X \cap \mathfrak X_{\nu+1} ) \cup \overline{\gamma_\kappa} \subset  \partial\,\mathbb X_\kappa\,$. Recall that the boundary map $ h^{\nu} : \partial\,\mathbb X_\kappa \onto \partial \,\mathbb S_\kappa\,$ is injective on $\,\mathfrak X_{\nu+1}^\kappa \,$. Once this is done, the construction of $\, h^{\nu+1}\,$ will be completed in the following way:
for each $\,\kappa\,$ we choose and fix $\, j =  j_\kappa\,$ sufficiently large so that
$$
\| h^{\nu} - h^{\kappa, \,\nu}_{j_\kappa} \|_{\mathscr R(\mathbb X_\kappa)} \le \varepsilon
$$
Then we replace each $\, h^{\nu} : \overline{\mathbb X}_\kappa \onto \overline{\mathbb S}_\kappa\,$ by  $\, h^{\kappa, \,\nu}_{j_\kappa} : \overline{\mathbb X}_\kappa\onto \overline{\mathbb S}_\kappa\,$ to obtain the desired map
\[
h^{\nu+1} \deff
\begin{cases} \, h^{\kappa, \,\nu}_{j_\kappa}  &   \textrm{on $\, \mathbb X_\kappa\,,\;\;\;\;\kappa = 1,2, \dots , n$ } \\
h^{\nu} & \textrm{otherwise}
\end{cases}
\]

Thus we are reduced to finding the sequence $\, h^{\kappa, \,\nu}_j : \overline{\mathbb X}_\kappa\onto \overline{\mathbb S}_\kappa\,, \;j = 1, 2 , \dots \,$. Before proceeding to somewhat involved computation we need to simplify the domain and the target of $\, h^{\nu} : \overline{\mathbb X}_\kappa \onto \overline{\mathbb S}_\kappa\,$. Since the problem is clearly unaffected by a rotation of the target (harmonicity of the map is not compromised), we may confine ourselves to the segment of the form
$$
 \mathbb S_\kappa = \mathbb S \deff \{ \xi\,: |\xi| < 1\,,\;\; \cos \omega <\Re e \,\xi < 1\, \}  ,\;\;\;\textnormal{for some} \;\;0 < \omega < \frac{\pi}{2}
$$
Thus its arc $\,  \mathcal C$ is $\{ \xi = e^{\,i\,\phi} \,: \;\; -\omega \leqslant \phi \leqslant  \omega\,\}\,$, the base $\,\mathbf I $ is $ \{\,\xi =  \cos \omega \,+\,i \tau\, , \;\;\;-\sin \omega \leqslant  \tau \leqslant \sin \omega\, \}\,$ and the corners are $\,\xi^+ = e^{\,i\,\omega}\,,\, \xi^- = e^{\,- i\,\omega} $.
Regarding the domain $\,\mathbb X_\kappa\,$, it is legitimate to conformally transform it onto the unit disk $\mathbb D\,$; any conformal mapping between two Jordan domains induces an isometry of their Royden algebras. Thus we consider a conformal map $\,\chi\,: \mathbb D \, \onto \mathbb X_\kappa\,$ and the pullback  $\,f \deff h^{\nu} \circ \chi : \mathbb D \onto \mathbb S\,$. This is  a homeomorphism up to the boundaries.  One extra condition turns out to be useful; namely, we may choose $\,\chi\,$ to be normalized  at three boundary points so that the map $\,f \deff h^{\nu} \circ \chi\,$ satisfies,
\begin{equation}\label{Bconditions}
f(e^{\,i\,\omega}) = e^{\,i\,\omega}\,,\;\;\; f(e^{\,-i\,\omega}) = e^{\,-i\,\omega}\,,\;\;\;\; f(-1) = \cos \omega\,\;
\end{equation}
The first two values of $\,f\,$ are the endpoints of the base $\,\mathbf I \subset \partial\,\mathbb S\,$ and last one is its midpoint. Let us state clearly what we aim to show.

\begin{proposition}\label{propo} Let $\,f : \mathbb D \onto \mathbb S\,$ be an orientation preserving  harmonic homeomorphism of finite energy and let its continuous extension $\,f : \overline{\mathbb D} \onto \overline{\mathbb S}\,$ satisfy~\eqref{Bconditions}. Thus $f$ maps the closed  arc $\mathcal C\,$ monotonically onto itself and  $\,\mathbf T \,$ homeomorphically onto the base $\,\mathbf I \subset \partial\,\mathbb S\,$. Suppose, in addition,  that  $\,f : \mathcal C \onto \mathcal C\,$ is  injective on a compact subset $\,\mathbf K \subset \mathcal C\,$. Then there exist homeomorphisms $\,f_m \,: \overline{\mathbb D} \onto \overline{\mathbb D}\,$ converging to $\,f : \overline{\mathbb D} \onto \overline{\mathbb S}\,$ uniformly and in the Sobolev space $\,\mathscr W^{1, 2}(\mathbb D , \mathbb D)\,$. Moreover $\,f_m \equiv f\,$ on $\,\mathbf T \,$ and $\,\mathbf K\,$, for $\, m= 4, 5, \dots$.
\end{proposition}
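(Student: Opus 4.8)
The plan is to exploit that $\mathbb S$ is convex --- it is the intersection of $\mathbb D$ with the half-plane $\{\,\re\xi>\cos\omega\,\}$ --- and to produce every approximant as a harmonic extension controlled by the Rad\'o--Kneser--Choquet theorem (Theorem~\ref{RKC}). I would leave the boundary map unchanged on $\mathbf T$ and alter it only on the arc $\mathcal C$, replacing the monotone surjection $f\colon\mathcal C\onto\mathcal C$ by honest homeomorphisms $g_m\colon\mathcal C\onto\mathcal C$ that fix the corners $e^{\pm i\omega}$, coincide with $f$ on $\mathbf K$, and converge to $f|_{\mathcal C}$ both uniformly and in the trace space $\mathscr W^{1/2,2}(\mathcal C)$. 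Setting $b_m\colon\partial\mathbb D\onto\partial\mathbb S$ equal to $f$ on $\mathbf T$ and to $g_m$ on $\mathcal C$ yields a boundary homeomorphism (note $\mathbf T$ and $\mathcal C$ map onto the two arcs $\mathbf I$ and $\mathcal C$ of $\partial\mathbb S$, which meet only at the corners), and I would take $f_m$ to be the harmonic extension of $b_m$. Because $\mathbb S$ is convex and $b_m\colon\partial\mathbb D\onto\partial\mathbb S$ is a homeomorphism, in particular monotone, Theorem~\ref{RKC} applies and makes $f_m$ a $\mathscr C^\infty$-diffeomorphism of $\mathbb D$ onto $\mathbb S$ with continuous boundary values $b_m$; hence each $f_m\colon\overline{\mathbb D}\onto\overline{\mathbb S}$ is a homeomorphism.

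To build the $g_m$, first record that $f$, being a finite-energy harmonic homeomorphism, equals the Poisson extension of its boundary trace, so $f|_{\partial\mathbb D}\in\mathscr W^{1/2,2}(\partial\mathbb D)$; and that, in an arclength parameter of $\mathcal C$ normalized by~\eqref{Bconditions}, $f|_{\mathcal C}$ is a non-decreasing continuous surjection whose intervals of constancy form a countable family of pairwise disjoint closed subarcs $\overline{I_i}$, each collapsed to a point, with $\mathbf K$ meeting each $\overline{I_i}$ in at most one point (else $f|_{\mathbf K}$ would not be injective). On each component $J$ of $\mathcal C\setminus\mathbf K$ I would put $g_m|_{\overline J}=(1-\delta_m)\,f|_{\overline J}+\delta_m\,\ell_J$, where $\ell_J$ is the affine homeomorphism of $\overline J$ onto the arc $f(\overline J)$; this convex combination is strictly increasing on $J$, equals $f$ at the two endpoints of $J$ (which lie in $\mathbf K\cup\{e^{\pm i\omega}\}$), and therefore patches into a homeomorphism $g_m\colon\mathcal C\onto\mathcal C$ coinciding with $f$ on $\mathbf K$ --- a small separate adjustment near the corners covering the at most two components of $\mathcal C\setminus\mathbf K$ on which $f$ is constant. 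Then $g_m-f=\delta_m(g_1-f)$ with $g_1-f$ vanishing on $\mathbf K$ and equal to $\ell_J-f$ on each $J$, so uniform convergence is clear, and the $\mathscr W^{1/2,2}$-convergence reduces to the single claim $g_1-f\in\mathscr W^{1/2,2}(\mathcal C)$.

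The convergence of the $f_m$ is then formal: $f_m-f$ is the harmonic extension of $b_m-f|_{\partial\mathbb D}$, so the maximum principle gives $\|f_m-f\|_{\mathscr C^0(\overline{\mathbb D})}\le\|b_m-f\|_{\mathscr C^0(\partial\mathbb D)}\to0$, while boundedness of the harmonic extension $\mathscr W^{1/2,2}(\partial\mathbb D)\to\mathscr W^{1,2}(\mathbb D)$ gives $\mathscr E_{\mathbb D}[f_m-f]\to0$; hence $f_m\to f$ uniformly and in $\mathscr W^{1,2}(\mathbb D,\mathbb S)$. Finally $f_m$ has boundary values $b_m$, so $f_m\equiv f$ on $\mathbf T$ and $f_m=g_m\equiv f$ on $\mathbf K$; the index range $m\ge4$ is an inessential relabeling.

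I expect the real work to be the claim $g_1-f\in\mathscr W^{1/2,2}(\mathcal C)$ --- equivalently, that the monotone map $f|_{\mathcal C}$ can be opened up into a strictly increasing one, fixed on $\mathbf K$, by a correction that is small not merely uniformly but in the fractional Sobolev norm, since only the latter transmits through the harmonic extension to the $\mathscr W^{1,2}$-convergence demanded by the proposition. This rests on the $\mathscr W^{1/2,2}$-regularity of the trace $f|_{\mathcal C}$ and a Hardy-type estimate controlling the extension by zero of $\ell_J-f$ over the (possibly accumulating) components $J$ of $\mathcal C\setminus\mathbf K$, together with a check that these countably many local corrections add up to a single bounded element of $\mathscr W^{1/2,2}(\mathcal C)$.
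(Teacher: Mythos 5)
Your overall strategy (replace the boundary trace on the bad arc $\mathcal C$ by a convex combination of $f$ and an affine map on each component of $\mathcal C\setminus\mathbf K$, extend harmonically via Rad\'o--Kneser--Choquet, and propagate convergence of the boundary trace through the harmonic extension) is the same skeleton that the paper uses. But there are two substantive issues where your version does not close.

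First, your interpolation uses a single parameter $\delta_m\to0$ on every component $J$, i.e.\ $g_m|_J=(1-\delta_m)f+\delta_m\ell_J$. The slope of the affine correction on $J$ is $\delta_m\,|f(J)|/|J|$, which blows up on short components $J$ that $f$ stretches to long arcs; nothing in the hypotheses rules this out, since $f$ is merely a monotone surjection of $\mathcal C$. The paper uses a \emph{component-dependent} weight, namely $t_J=|J|/m$ in place of your $\delta_m$. That choice is what makes the added drift have uniformly small derivative $\le 2\omega/m$ on every component, which in turn yields the one-line inequality $|\phi_m(\theta_1)-\phi_m(\theta_2)|\le|\phi(\theta_1)-\phi(\theta_2)|+|\theta_1-\theta_2|$ on each component --- the starting point of the paper's key estimate. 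With a fixed $\delta_m$, no such uniform per-component bound is available.

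Second, and more fundamentally, you reduce the $\W^{1,2}$--convergence of $f_m\to f$ to the claim $g_1-f\in \W^{1/2,2}(\mathcal C)$ (so that $g_m-f=\delta_m(g_1-f)\to0$ in $\W^{1/2,2}$), and you yourself flag this as ``the real work,'' sketching a Hardy-type extension-by-zero argument without carrying it out. That is precisely the gap. It is not automatic: $g_1$ is an arbitrary circle homeomorphism whose only structure is being affine on countably many components, and boundary homeomorphisms need not have finite Douglas integral --- finiteness of $\mathscr E_\D[g_1]$ requires an argument, and the diagonal blocks $J\times J$ of the Douglas integral already produce terms of size $|f(J)|^2$ whose off-diagonal interaction is the delicate part. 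The paper sidesteps the fractional Sobolev framework entirely. It proves a concrete \emph{pointwise} bound, Lemma~\ref{mess}, namely $|f_m(\xi_1)-f_m(\xi_2)|\le \tfrac{5}{\sin(\omega/4)}|f(\xi_1)-f(\xi_2)|+4|\xi_1-\xi_2|$ uniformly in $m$, obtained by a case analysis across components and across the corner $\xi^+$. Inserted into Douglas's formula~\eqref{Douglas}, this yields an $m$-independent integrable majorant for the Douglas integrand; Dominated Convergence then gives $\mathscr E_\D[f_m]\to\mathscr E_\D[f]$, which together with the weak convergence upgrades to strong $\W^{1,2}$ convergence. You would need to replace your ``I expect the real work to be\dots'' paragraph with an actual estimate of this sort; as written, the proposal does not prove $\W^{1,2}$ convergence.

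A minor remark: your ``small separate adjustment near the corners'' is only needed if $e^{\pm i\omega}\notin\mathbf K$. In the application inside the proof of Theorem~\ref{Main}, the corners are simple values and sit in $\mathbf K$ (they are images of $\overline{\gamma_\kappa}$), so components of $\mathcal C\setminus\mathbf K$ always have both endpoints in $\mathbf K$ and $f$ is never constant on a whole component; but you are right to notice that the bare statement of the proposition does not spell this out.
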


Once this proposition is established, the proof of Theorem \ref{Main} is complete. Thus, in steps IV and V, we shall concern ourselves only with the proof of this proposition.

\begin{center}\begin{figure}[h]\label{figure2}
\includegraphics[width=0.99\textwidth]{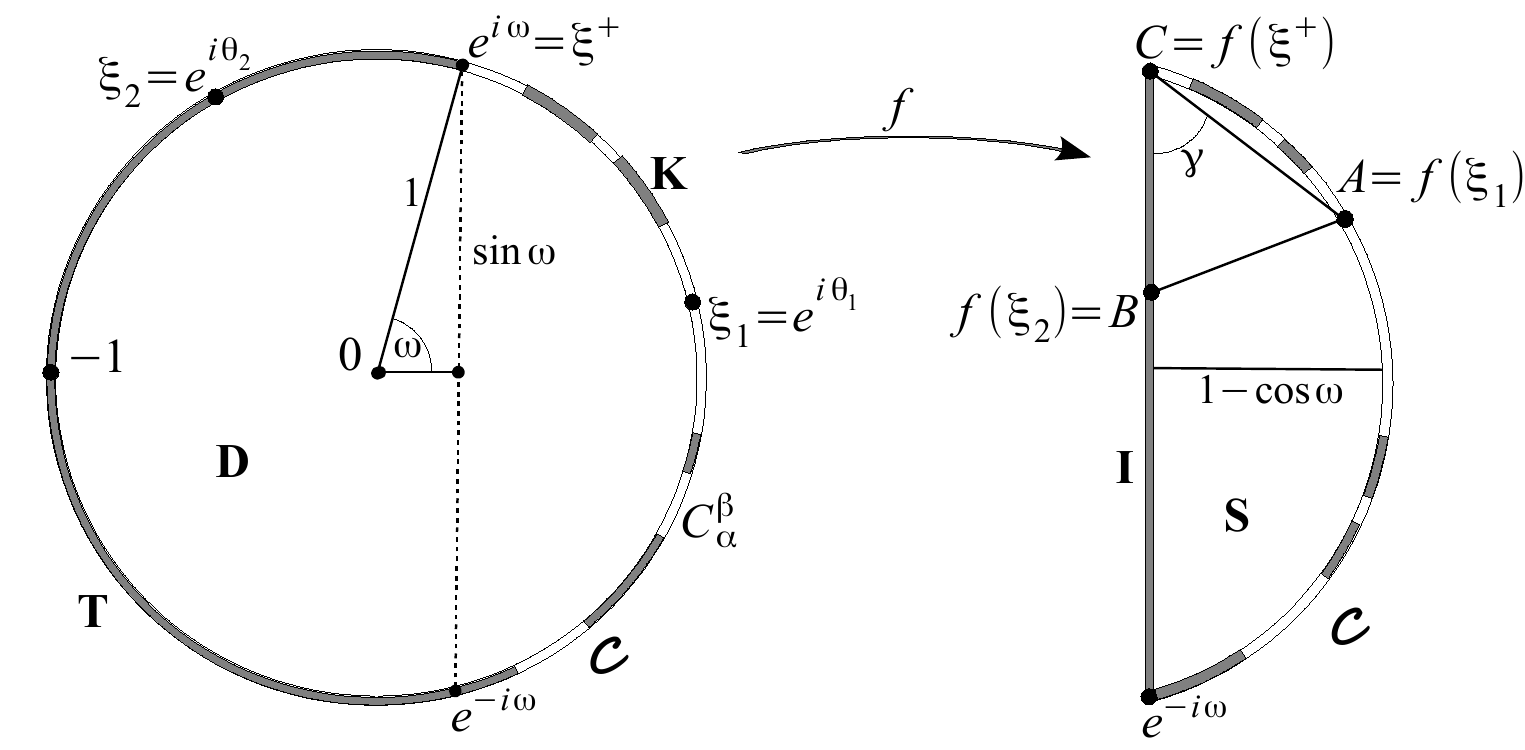} \caption{Harmonic map $f \colon \mathbb D \onto \mathbb S$.}\label{seg}
\end{figure}\end{center}

\textbf{Step IV.} (\textit{Good approximation of the boundary map $\,f : \partial\,\mathbb D \onto \partial\,\mathbb S\,$}).
The aim is to properly approximate the continuous monotone boundary map $\,f : \partial\,\mathbb D \onto \partial\,\mathbb S\,$  by homeomorphisms $\,f_m : \partial\,\mathbb D \onto \partial\,\mathbb S\,$  which agree with $\, f\,$ on both sets $\,\mathbf T = \overline{\mathbb T \setminus \mathcal C}\,$ and $\,\mathbf K\subset \mathcal C\,$. Once such an approximation is in hand, we shall extend each $\,f_m\,$ harmonically inside the disk. Certainly, uniform convergence $\,f_m \rightrightarrows f\,$ on $\,\mathbb T\,$ would suffice to deduce uniform convergence in the entire disk, by the maximum principle. However, it is not obvious at all how to make the approximation of the boundary map in order to control the energy of the extended mappings. The Douglas criterion will come into play.

We need only work to construct  homeomorphisms $\,f_m  : \mathcal C \onto \mathcal C\,$, as their values on $\, \mathbf T = \overline{\mathbb T\setminus \mathcal C}\,$ are already known; $\, f_m \,\equiv\, f  : \mathbf T \onto \,\mathbf T\,$.
 Let us write $\,f : \mathcal C \onto \mathcal C\,$ as $\,f(e^{\,i\,\theta}) =  e^{\,i\,\phi(\theta)}\,$, where $\,\phi : [-\omega\,, \omega ] \onto [-\omega\,, \omega ]\,$ is a nondecreasing continuous function such that $\,\phi(-\omega) = -\omega\,$ and $\,\phi(\omega) = \omega\,$. We also require that $\, f_m \,\equiv\, f \,$ on $\,\mathbf K \subset \mathcal C\,$.  The complement $\,\mathcal C \setminus \mathbf K\,$ consists of a countable number of disjoint open circular arcs (components) whose endpoints belong to $\,\mathbf K\,$. Let $\, \mathcal C^\beta_\alpha \, \deff \,\{e^{i\,\theta}\,;\; \alpha \leqslant \theta \leqslant \beta  \} \,$ be one of such arcs together with its endpoints, where $\, -\omega \leqslant \alpha < \beta \leqslant \omega\,$, and similarly for the image $\, -\omega \leqslant \phi(\alpha) < \phi(\beta) \leqslant \omega\,$. The latter strict inequality is justified by the fact that $\, e^{\,i\,\alpha}\,$ and  $\, e^{\,i\,\beta}\,$ are points in $\mathbf K\,$ and  the map $\,f\,$, being injective on $\,\mathbf K\,$,  assumes  distinct values $\, e^{\,i\,\phi(\alpha)}\,$ and  $\, e^{\,i\,\phi(\beta)}\,$ at these points.

 We now define homeomorphisms between closed arcs $\,f_m  : \mathcal C^{\,\beta}_\alpha  \onto \mathcal C^{\,\phi(\beta)}_{\phi(\alpha)} \,$ ($\,m \geqslant 4\,$)  by the rule; $\,f_m(e^{\,i\,\theta}) = e ^{\,i\, \phi_m(\theta)}\,$, where
 \begin{equation}\label{Deff/of/fm}
 \;\;\phi_m(\theta) = \Big[1 - \frac{\beta - \alpha}{m} \Big]\cdot \big[ \phi(\theta) - \phi(\alpha) \big]\;+\; \frac{\phi(\beta) - \phi(\alpha)}{m}\,\cdot \big(\theta - \alpha\big) \;+\; \phi(\alpha)
 \end{equation}
 for $\,\alpha \leqslant \theta \leqslant \beta\,$.
 We have $\, \phi_m(\alpha) = \phi(\alpha)\,$ and  $\, \phi_m(\beta) = \phi(\beta)\,$. The first term defining $\,\phi_m\,$ is nondecreasing in $\,\theta\,$ while the second term is strictly increasing. Therefore, $\,f_m  : \mathcal C^{\,\beta}_\alpha  \onto \mathcal C^{\,\phi(\beta)}_{\phi(\alpha)} \,$ is a homeomorphism. Formula (\ref{Deff/of/fm}), applied to every arc component of $\,\mathcal C \setminus \mathbf K\,$,  gives homeomorphisms which agree with $\,f\,$ at the endpoints of the arcs. We glue them together with $\,f\,$ at the endpoints to obtain a homeomorphism $\, f_m  : \mathbb T \onto \mathbb T\,$ which coincides with $\, f  : \mathbb T \onto \mathbb T\,$ on $\,\mathbf T \cup \mathbf K\,$. Further analysis of  $\,f_m\,$ is necessary to deduce proper convergence  as $\, m \rightarrow \infty\,$.
First note that on each arc component of $\,\mathcal C \setminus \mathbf K\,$ we have
$$\,|\,f_m(e^{\,i\,\theta}) - f(e^{\,i\,\theta})\,| \leqslant |\,\phi_m(\theta) -\phi(\theta)\,|\, \leqslant \frac{8\,\omega ^2}{m}  < \frac{21}{m}
$$
Hence
\begin{equation}\label{uniform}
|\,f_m(\xi) - f(\xi)\,|\leqslant  \frac{21}{m}\,,\;\;\;\textnormal{for every}\;\;\xi \in \mathbb T \, \;\;\textnormal{and}\;\; m = 4,5, \dots
\end{equation}
In particular, $\,f_m \rightrightarrows f\,$ uniformly on $\,\mathbb T\,$.

\begin{lemma}\label{mess} For all $\,\xi_1 ,\xi_2 \in \mathbb T\,$ and $\, m = 4,5, \dots$, we have
\begin{equation}\label{Estimate for Douglas}
|f_m(\xi_1) - f_m(\xi_2)| \;\leqslant \frac{5}{\sin (\omega/4)}  \, |f(\xi_1) - f(\xi_2)|\;+\;  4\,|\xi_1 - \xi_2 |
\end{equation}
\end{lemma}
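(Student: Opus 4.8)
The plan is to reduce the estimate to a position-by-position comparison of the angular coordinates $\phi_m$ and $\phi$, and to exploit the elementary fact that on the arc $\mathcal C$ the chordal distance $|e^{i\psi_1}-e^{i\psi_2}|$ is comparable (up to the factor $\sin(\omega/4)$ coming from the opening angle) to the angular distance $|\psi_1-\psi_2|$, as long as all angles stay in $[-\omega,\omega]$. First I would dispose of the trivial cases: if both $\xi_1,\xi_2\in\mathbf T$ then $f_m(\xi_i)=f(\xi_i)$ and the inequality holds with constant $1$; if both lie on the same closed arc component $\mathcal C^{\beta}_{\alpha}$ of $\mathcal C\setminus\mathbf K$, I would write $\xi_j=e^{i\theta_j}$ and estimate $|\phi_m(\theta_1)-\phi_m(\theta_2)|$ directly from formula \eqref{Deff/of/fm}: the first term contributes at most $|\phi(\theta_1)-\phi(\theta_2)|$ (the bracketed coefficient is in $[0,1]$) and the second term contributes at most $\frac{\phi(\beta)-\phi(\alpha)}{m}|\theta_1-\theta_2|\le \frac{1}{m}\cdot 2\omega\cdot|\theta_1-\theta_2|$, which is controlled by $|\theta_1-\theta_2|$ and hence by $|\xi_1-\xi_2|$ up to a harmless constant. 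Passing back from angular to chordal distance on both sides then produces an inequality of the required shape.

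The substantive case is when $\xi_1$ and $\xi_2$ are separated by at least one point of $\mathbf K$, or lie in different components. Here the idea is to \emph{interpolate through points of $\mathbf K$}, on which $f_m$ and $f$ agree. Concretely, I would locate the endpoints of the components containing $\xi_1$ and $\xi_2$: say $\xi_1\in\mathcal C^{\beta_1}_{\alpha_1}$ and $\xi_2\in\mathcal C^{\beta_2}_{\alpha_2}$ with, after relabeling, $\alpha_1\le\beta_1\le\alpha_2\le\beta_2$, so that $e^{i\beta_1},e^{i\alpha_2}\in\mathbf K$. Then
\[
|f_m(\xi_1)-f_m(\xi_2)|\le |f_m(\xi_1)-f_m(e^{i\beta_1})|+|f_m(e^{i\beta_1})-f_m(e^{i\alpha_2})|+|f_m(e^{i\alpha_2})-f_m(\xi_2)|.
\]
The middle term equals $|f(e^{i\beta_1})-f(e^{i\alpha_2})|$ because $f_m\equiv f$ on $\mathbf K$. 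For the two outer terms I apply the same-component estimate from the first paragraph: each is bounded by $|f(\xi_1)-f(e^{i\beta_1})|$ (respectively $|f(e^{i\alpha_2})-f(\xi_2)|$) plus a constant times $|\xi_1-e^{i\beta_1}|$ (respectively $|e^{i\alpha_2}-\xi_2|$). Now I use monotonicity of $\phi$: since the points $e^{i\beta_1},e^{i\alpha_2}$ lie \emph{between} $\xi_1$ and $\xi_2$ on the arc and $f$ is monotone, their $f$-images lie between $f(\xi_1)$ and $f(\xi_2)$, so all three of $|f(\xi_1)-f(e^{i\beta_1})|$, $|f(e^{i\beta_1})-f(e^{i\alpha_2})|$, $|f(e^{i\alpha_2})-f(\xi_2)|$ are at most a fixed multiple of $|f(\xi_1)-f(\xi_2)|$ — the multiple being $5/\sin(\omega/4)$ after converting the angular betweenness into chordal distances on the arc of opening $2\omega$ (the $\sin(\omega/4)$ accounting for the worst-case conversion constant, and the crude numerical factor $5$ absorbing the passage from each of the three subarcs). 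Likewise $|\xi_1-e^{i\beta_1}|$ and $|e^{i\alpha_2}-\xi_2|$ sum to at most $|\xi_1-\xi_2|$ up to a constant by the same betweenness on $\mathbb T$, yielding the $4|\xi_1-\xi_2|$ term. Summing the three pieces gives \eqref{Estimate for Douglas}.

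The main obstacle I anticipate is purely bookkeeping: making the geometric constants honest. The only genuinely quantitative input is the two-sided comparison $c(\omega)|\psi_1-\psi_2|\le|e^{i\psi_1}-e^{i\psi_2}|\le|\psi_1-\psi_2|$ valid for $\psi_1,\psi_2\in[-\omega,\omega]$, together with the fact that for points $p,q,r$ in cyclic order along $\mathbf I$ (or along the subarc $\mathcal C$) one has $|p-q|,|q-r|\le|p-r|$ when the three are colinear (the base $\mathbf I$ is a line segment!) and $|p-q|,|q-r|\le C|p-r|$ on the arc. One has to make sure the accumulated constant from chaining three subarcs, from the angular-to-chordal conversion on a $2\omega$-arc, and from the $1/m\le 1/4$ bound on the auxiliary linear term, all fit under the stated $5/\sin(\omega/4)$ and $4$; this is a finite, if slightly tedious, optimization of numerical factors rather than a conceptual difficulty. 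No step requires anything beyond the monotonicity of $\phi$, the explicit convex-combination structure of $\phi_m$ in \eqref{Deff/of/fm}, and plane Euclidean geometry.
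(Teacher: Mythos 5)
There is a genuine gap: your case decomposition covers $\xi_1,\xi_2\in\mathbf T$ (trivial), $\xi_1,\xi_2$ in the same component of $\mathcal C\setminus\mathbf K$, and $\xi_1,\xi_2\in\mathcal C$ separated by $\mathbf K$-points, but it never addresses the mixed case $\xi_1\in\mathcal C$, $\xi_2\in\mathbf T$. That case is not a bookkeeping afterthought; it is the only place the factor $1/\sin(\omega/4)$ actually arises, and it requires a different geometric idea. Your heuristic attribution of $\sin(\omega/4)$ to ``angular-to-chordal conversion on a $2\omega$-arc'' is mistaken: when three image points sit in order on a single circular arc of opening $<\pi$ (your Case 2) or on the line segment $\mathbf I$, betweenness already gives $|p-q|\le|p-r|$ with constant $1$, so the chaining there yields the clean constants $5$ and $4$ with no $\omega$-dependence. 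The $\omega$-dependent loss occurs precisely when the chain must pass \emph{through the corner} $\xi^+=e^{i\omega}$ of the segment $\mathbb S$, i.e.\ when $f(\xi_1)$ lies on the arc $\mathcal C$ of $\partial\mathbb S$ and $f(\xi_2)$ on the base $\mathbf I$: the three image points $f(\xi_1),f(\xi^+),f(\xi_2)$ then form a genuine triangle, and one must control $|f(\xi_1)-f(\xi^+)|+|f(\xi^+)-f(\xi_2)|$ by $|f(\xi_1)-f(\xi_2)|$. The paper does this with the law of cosines, using that the angle at $f(\xi^+)$ is pinched between $\omega/2$ and $\omega$, which is where $\sin(\omega/4)$ enters. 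That argument in turn uses the normalization $f(-1)=\cos\omega$ from \eqref{Bconditions} to force $f(\xi_2)$ into the upper half of the base (so that the angle bound holds), and a preliminary reduction to $|\xi_1-\xi_2|\le\sin\omega$ and $f(\xi_1)$ in the upper half of $\mathcal C$, disposing of the complementary cases by the crude bound $|f_m(\xi_1)-f_m(\xi_2)|\le\diam\mathbb S$. None of this appears in your sketch, so as written the proof does not establish the stated inequality for all $\xi_1,\xi_2\in\mathbb T$.
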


\begin{proof} There are three cases to consider:

 \textit{\textbf{Case 1.}} We first do the case when  both $\,\xi_1\,$ and $\,\xi_2\,$ belong to the closure of the same component  of $\,\mathcal C\setminus \mathbf K\,$; say, $\,\xi_1 = e^{\,i\,\theta_1} \in \mathcal C^{\,\beta}_\alpha\,$  and $\,\xi_2 = e^{\,i\,\theta_2} \in \mathcal C^{\,\beta}_\alpha\,$,  where $\,\alpha \leqslant \theta_1 \,,\, \theta _2  \leqslant \beta\,$.  It follows from formula (\ref{Deff/of/fm})   that
\[
\begin{split}
\,| \,f_m(e^{\,i\,\theta_1}) - f_m(e^{\,i\,\theta_2})\,| &\leqslant   | \,\phi_m(\theta_1) - \phi_m(\theta_2)\,|  \leqslant  | \,\phi(\theta_1) - \phi(\theta_2)\,| \,+ |\theta_1 - \theta_2|
\\
&\leqslant 2\,| \,f(e^{\,i\,\theta_1}) - f(e^{\,i\,\theta_2})\,| \;+ 2\,| \,e^{\,i\,\theta_1} - e^{\,i\,\theta_2}\,|
\end{split}
\]
\vskip0.3cm
 \textit{\textbf{Case 2.}} Both $\,\xi_1 = e^{\,i\,\theta_1}\,$ and $\,\xi_2 = e^{\,i\,\theta_2}\,$ belong to  $\,\mathcal C\,$. We assume that the closed set  $\,\{\, \tau\;:\; \theta_1 \leqslant \tau \leqslant \theta_2\,,\;\;\;e^{\,i\,\tau\,} \in \mathbf K\,\}\,$ is not empty. Otherwise, $\,\xi_1\,$ and $\,\xi_2\,$ would belong to the same arc component of  $\,\mathcal C\setminus \mathbf K\,$. Let us set the notation,
 $$
 \tau_1 \deff \min \,\{\, \tau\;:\; \theta_1 \leqslant \tau \leqslant \theta_2\,,\;\;\;e^{\,i\,\tau\,} \in \mathbf K\,\} \,,\;\;\;\; \widehat{\xi}_1 \deff e^{\,i\,\tau_1} \in \mathbf K\;$$
$$ \tau_2 \deff \max\,\{\, \tau\;:\; \theta_1 \leqslant \tau \leqslant \theta_2\,,\;\;\;e^{\,i\,\tau\,} \in \mathbf K\,\}  \,,\;\;\;\; \widehat{\xi}_2\deff e^{\,i\,\tau_2} \in \mathbf K\;
 $$
 and note that the points $\,\xi_1\,,\widehat{\xi}_1\,$ belong to the closure of one arc component in $\,\mathcal C\setminus \mathbf K\,$. The same applies to the pair $\,\xi_2\,,\widehat{\xi}_2\,$. Therefore, using \textit{Case 1}, one can write the following chain of inequalities

 \begin{equation}\label{equicont}
\begin{split}
|\,f_m(\xi_1) &- f_m(\xi_2)\,| 
\\ & \leqslant  |\,f_m(\xi_1) - f_m(\widehat{\xi}_1)\,|\,\, + \,\,|\,f_m(\widehat{\xi}_1) - f_m(\widehat{\xi}_2)\,|\,\, + \,\, |\,f_m(\widehat{\xi}_2) - f_m(\xi_2)\,| \nonumber \;\\& \leqslant  2\,|\,f(\xi_1) - f(\widehat{\xi}_1)\,|\,\, + \,\,2\, |\, \xi_1 - \widehat{\xi}_1\,|\,\, +\,\, |\,f(\widehat{\xi}_1) - f(\widehat{\xi}_2)\,|\;  \\& + 2\,|\,f(\widehat{\xi}_2) - f(\xi_2)\,|\,\, +\,\, 2\,|\, \widehat{\xi}_2 -  \xi_2\,|
 \end{split}
 \end{equation}
 Since $\,\widehat{\xi}_1\,$ and $\,\widehat{\xi}_2\,$ lie in the shorter circular arc that connects $\,\xi_1\,$ to $\,\xi_2\,$, it follows that $$\, |\,\xi_1 - \widehat{\xi}_1 \,| \leqslant |\,\xi_1 - \xi_2 \,|\,\;\;\;\textnormal{and}\;\;\;\, |\,\widehat{\xi}_2 - \xi_2\, | \leqslant |\,\xi_1 - \xi_2 \,|\,
 $$
 The same argument applies to the images of these point under the monotone map $\, f  : \mathcal C \onto \mathcal C\,$. Thus,
  $$\, |\,f(\xi_1) - f(\widehat{\xi}_1) \,| \leqslant |\,f(\xi_1) - f(\xi_2)\, |\,\;\;\;\textnormal{and}\;\;\;\, |\,f(\widehat{\xi}_2) - f(\xi_2) \,| \leqslant |\,f(\xi_1) - f(\xi_2) \,|\,
  $$
  and also  $\,|\,f(\widehat{\xi}_1) - f(\widehat{\xi}_2) \,| \leqslant |\,f(\xi_1) - f(\xi_2)\, |\,$.
  Substitute these inequalities into the chain above to conclude with the desired inequality
  $$
  |\,f_m(\xi_1) - f_m(\xi_2)\,| \leqslant \;5\,|\,f(\xi_1) - f(\xi_2)\,| \;+\;4\,|\,\xi_1 -  \xi_2\,|
  $$

The case $\,\xi_1 \,,\, \xi_2 \in \mathbf T \,$ is trivial, because $\,f_m(\xi_1) - f_m(\xi_2) = f(\xi_1) - f(\xi_2)\,$.  Thus, all that remains is to consider

\textit{\textbf{Case 3.}} Let $\,\xi_1 \in \mathcal C\,$ and $\,\xi_2\in \mathbf T \,$.  By symmetry we may take $\, \xi_1 = e^{\,i\,\theta_1}\,$,  where $\, 0\leqslant \theta_1 < \omega\,$.  We may also assume that $|\,\xi_1 - \xi_2\,|  \leqslant  \sin {\omega}$. Otherwise, inequality  (\ref{Estimate for Douglas}) holds; namely, $\,|\,f_m(\xi_1) - f_m(\xi_2)\,| \leqslant \textnormal{diam} \,\mathbb S = 2\,\sin \omega  < 2 |\,\xi_1 - \xi_2\,|\,$. Geometrically, the assumption $\,|\,\xi_1 - \xi_2\,|  \leqslant  \sin {\omega}$ tells us that $\,\xi_2\,$ cannot lay in the lower half of the arc $\, \mathbf T\,$. Thus $\,\xi_2 = e^{\,i\,\theta_2}\,$,  where $\, \omega < \theta_2 \leqslant \pi\,$. Let the upper corner of the segment $\,\mathbb S\,$ be denoted by  $\, \xi^+ = e^{\,i\,\omega}\,$.  The location of $\,f(\xi_2)\,$ is restricted to the upper half of the base of the segment $\,\mathbb S\,$, because $\,f : \mathbb T \onto \mathbb T \,$ is monotone and  $\,f( e^{\,i\,\pi})  = \cos \omega\,$  (the midpoint  of the base)  due to normalization at (\ref{Bconditions}). Regarding the position of $\,f(\xi_1) \in \mathcal C\,$, we may assume that this value  also lies in the upper half of the arc $\,\mathcal C\,$. Otherwise, we would have  $\,|\,f(\xi_1) - f(\xi_2)\,|  \geqslant  1 - \cos \omega  = 2 \sin^2 \frac{\omega}{2}\,$ while, on the other hand, 
\[
\begin{split}
\,|\,f_m(\xi_1) - f_m(\xi_2)\,| &\leqslant \textnormal{diam} \,\mathbb S = 2\,\sin \omega \, <  \frac{5}{\sin (\omega/4)}\cdot  2 \sin^2 \frac{\omega}{2} \\ &\leqslant \frac{5}{\sin (\omega/4)} \; |\, f(\xi_1)\, -\,f(\xi_2) \,|\,  
\end{split}
\]
which implies~\eqref{Estimate for Douglas}.

We are ready to complete Case~3. First we use Case~2 and the triangle inequality,  
\begin{equation}\label{equicont222}
\begin{split}
|\,f_m(\xi_1) - f_m(\xi_2)\,| \;& \leqslant  |\,f_m(\xi_1) - f_m(\xi^+)\,|\,\, + \,\,|\,f_m(\xi^+) - f_m(\xi_2)\,| \nonumber \;\\& \leqslant  5\,|\,f(\xi_1) - f(\xi^+)\,|\,\, + \,\,4\, |\, \xi_1 - \xi^+\,|\,\, +\,\, |\,f(\xi^+) - f(\xi_2)\,|\;  \\&  \leqslant  5\,\big \{\,|\,f(\xi_1) - f(\xi^+)\,|\,+\,  |\,f(\xi^+) - f(\xi_2)\,|\,\big\}+ \,4\,|\, \xi_1-  \xi_2\,|
 \end{split}
 \end{equation}
Then comes a geometric fact about the term within the curled braces. Certainly, we have $\,f(\xi^+) \neq f(\xi_2)\,$, because $\,f\,$ is injective on $\,\mathbf T\,$. If, incidentally,  $\,f(\xi^+) =  f(\xi_1)\,$  then the latter estimate yields~\eqref{Estimate for Douglas}. Thus we may assume that three points $\,A \deff f(\xi_1)\,, B\deff f(\xi_2)\,$ and $\, C \deff f(\xi^+) \,$ are vertices of a triangle. Let $ a = |\, B - C\,| , \, b = |\, A - C\,| \,$ and $\, c = |\,A - B\,|\,$. Since $\,A\,$ lies in the arc of $\,\mathbb S\,$, $\, B \,$ lies in the base of $\,\mathbb S\,$ and $\,C \,$ is the corner of $\mathbb S\,$, all of them in the upper half of $\,\mathbb S\,$, it follows (from geometry of the segment $\,\mathbb S\,$) that the angle opposite to the side $\,\overline{A\, B}\,$, denoted by $\,\gamma\,$, satisfies: $\, \frac{\omega}{2}\leqslant  \gamma < \omega\,$. The law of cosines tells us that 
 \[
 c^2 = a^2 + b^2 - 2 ab \,\cos \,\gamma\,\geqslant a^2 + b^2 - 2 ab \,\cos \,(\omega/2) \geqslant  \,(a\,+\,b\,)^2\,\sin^2\frac{\omega}{4}.
\]
  Hence
\[
|\,f(\xi_1) - f(\xi^+)\,|\,+\,  |\,f(\xi^+) - f(\xi_2)\,|\, \leqslant \frac{1}{\sin\frac{\omega}{4}} \,|\,f(\xi_1) - f(\xi_2)\,| \,,
\] 
completing the proof of Lemma~\ref{mess}.
\end{proof}

\textbf{Step V.} (\textit{Harmonic extension and strong convergence in}  $\,\mathscr W^{1,2}(\mathbb D)\; $).

 The boundary homeomorphisms  $\,\,f_m  \,: \,\mathbb T  \onto \partial\,\mathbb S\,$ will now be extended harmonically inside the unit disk. We use the same label  for the extensions, $\,\,f_m  \,: \, \,\overline{\mathbb D} \onto \,\overline{\mathbb S }\,$. These mappings are homeomorphisms, due to Theorem~\ref{RKC}.   Since both $\,f\,$ and $\,f_m\,$ are harmonic in $\,\mathbb D\,$, the sequence $\,f_m\,$ converges to $\,f\,$ uniformly in $\,\overline{\mathbb D}\,$, by the maximum principle.  The key point here is that they also belong to the Sobolev space $\,\,\mathscr W^{1,2}(\mathbb D) \,$, and converge in the Sobolev norm as well. To see this we recall the Douglas  criterion~\cite{Do} which asserts that any function  $\,g\,$ that is continuous on $\,\overline{\mathbb D}\,$ and harmonic in $\,\mathbb D\,$ satisfies
 \begin{equation}\label{Douglas}
\mathscr E_{_\mathbb D} [g] \,\deff \, \iint_\mathbb D |D g|^2 \;=\, \frac{1}{ 2\,\pi} \underset {\mathbb T \,\times \,\mathbb T }{\;\,\;\int \,\int}\;  \Big{|}\frac{g(\xi) - g(\zeta)}{\xi - \zeta }\Big{|}^2 \; |\textnormal d\xi| \!\cdot\!|\textnormal{d}\zeta | \;.
 \end{equation}
 Recall that by~\eqref{Estimate for Douglas} the mappings  $\,\,f_m  \,: \,\mathbb T  \onto \partial\,\mathbb S\,$ satisfy
\[
  \Big{|}\frac{f_m(\xi) - f_m(\zeta)}{\xi - \zeta }\Big{|}^2 \;\preccurlyeq\;  \Big{|}\frac{f(\xi) - f(\zeta)}{\xi - \zeta }\Big{|}^2 \; +\; 1
 \]
 where the implied constant does not depend on $\,m\,$.
 By virtue of~\eqref{Douglas} this implies
 \[
 \mathscr E_{_\mathbb D} [f_m] \;\preccurlyeq\;  \mathscr E_{_\mathbb D} [f]+ 1 <\infty\;.
 \]
 Therefore $\, f_m\,$ have uniformly bounded energy. It follows that $\,f_m\,$ converge to $\,f\,$ not only uniformly but also weakly in $\,\mathscr W^{1,2}(\mathbb D)\,$. In particular, $\,\mathscr E[f] \leqslant \liminf \mathscr E[f_m]\,$. It is crucial to notice, using Dominated Convergence Theorem, that in fact we have equality
   \begin{equation}
   \begin{split}
   \mathscr E_{_\mathbb D}[f] \,&= \frac{1}{ 2\,\pi} \underset {\mathbb T \,\times \,\mathbb T }{\;\,\;\int \,\int}\;  \Big{|}\frac{f(\xi) - f(\zeta)}{\xi - \zeta }\Big{|}^2 \; |\textnormal d\xi| \!\cdot\!|\textnormal{d}\zeta | \\&=  \lim \frac{1}{ 2\,\pi} \underset {\mathbb T \,\times \,\mathbb T }{\;\,\;\int \,\int}\;  \Big{|}\frac{f_m(\xi) - f_m(\zeta)}{\xi - \zeta }\Big{|}^2 \; |\textnormal d\xi| \!\cdot\!|\textnormal{d}\zeta | = \lim \mathscr E_{_\mathbb D}[f_m].
   \end{split}
   \end{equation}

This shows that $\,f_m\,$ converge to $\,f\,$ strongly in $\,\mathscr W^{1,2}(\mathbb D)\,$, completing the proof of 
Proposition~\ref{propo} and thus of Theorem \ref{Main}\,.
\qed

 \bibliographystyle{amsplain}

 \end{document}